\renewcommand{\textbf}[1]{\begingroup\bfseries\mathversion{bold}#1\endgroup}
\newtheorem{thm}{Theorem}[section]
\newtheorem{defi}{Definition}[section]
\newtheorem{prop}[thm]{Proposition}
\newtheorem{Conjecture}[thm]{Conjecture}
\theoremstyle{definition}
\newtheorem{remark}[thm]{Remark}
\newcommand{\R}{\mathbb R}
\newcommand{\Z}{\mathbb Z}
\newcommand{\N}{\mathbb N}
\numberwithin{equation}{section}
\def\XXint#1#2#3{{\setbox0=\hbox{$#1{#2#3}{\int}$}
    \vcenter{\hbox{$#2#3$}}\kern-.5\wd0}}
\date{date}
\begin{document}
\title{Theta functions and optimal lattices for a grid cells model}
\author{Laurent B\'{e}termin\\ \\
Faculty of Mathematics, University of Vienna,\\ Oskar-Morgenstern-Platz 1, 1090 Vienna, Austria\\ \texttt{laurent.betermin@univie.ac.at}. ORCID id: 0000-0003-4070-3344 }
\date\today
\maketitle

\begin{abstract}
Certain types of neurons, called ``grid cells", have been shown to fire on a triangular grid when an animal is navigating on a two-dimensional environment, whereas recent studies suggest that the face-centred-cubic (FCC) lattice is the good candidate for the same phenomenon in three dimensions. The goal of this paper is to give new evidences of these phenomena by considering a infinite set of independent neurons (a module) with Poisson statistics and periodic spread out Gaussian tuning curves. This question of the existence of an optimal grid is transformed into a maximization problem among all possible unit density lattices for a Fisher Information which measures the accuracy of grid-cells
representations in $\R^d$. This Fisher Information has translated lattice theta functions as building blocks. We first derive asymptotic and numerical results showing the (non-)maximality of the triangular lattice with respect to the Gaussian parameter and the size of the firing field. In a particular case where the size of the firing fields and the lattice spacing match with experiments, we have numerically checked that it is possible to find a value for the Gaussian parameter above which the triangular lattice is always optimal. In the case of a radially symmetric distribution of firing locations, we also characterize all the lattices that are critical points for the Fisher Information at fixed scales belonging to an open interval (we call these lattices ``volume stationary"). It allows us to compare the Fisher Information of a finite number of lattices in dimension 2 and 3 and to give another evidences of the optimality of the triangular and FCC lattices.
\end{abstract}

\noindent
\textbf{AMS Classification:}  Primary 49N20; Secondary 62P10.  \\
\textbf{Keywords:} Grid cells, Lattices, Optimization, Fisher Information, Theta functions.

\tableofcontents

\section{Introduction and setting}
\subsection{Presentation of the problem}

A challenging mathematical problem is to justify rigorously why periodic patterns arise in nature and experiments: densest packing \cite{Viazovska,CKMRV}, atoms in a solid \cite{Kantorovich,RadinLowT}, triangular lattices of Ginzburg-Landau vortices in type II superconductors \cite{Abrikosov,Sandier_Serfaty}, rich polymorphic behavior for systems with Coulombian interactions \cite{TrizacWigner16,LuoChenWei}, etc. This type of problem, also called ``Crystallization problem" (see \cite{BlancLewin-2015}), is often considered as variational, which means that optimal periodic structures can be seen as extrema of certain functionals usually derived from simplified models.

\medskip

A typical example in Neurobiology is the existence of grid cells in the medial entorhinal cortex (MEC) of the brain discovered by Hafting et al. \cite{Haftingetal} and that have been brought to light for many mammals (rats, humans, etc.), see e.g. \cite{Gridcellsreview}. Each of these neurons is tuned to the position of the animal and fires when it crosses the sites of an approximate triangular lattice (also called ``hexagonal lattice", see \eqref{eq:tri} for a precise formula and Figure \ref{fig:tri} for a representation) during  a two-dimensional navigation. Furthermore, several evidences have been found in the brains of flying bats and humans concerning face-centred-cubic (FCC) lattices encoding the spatial representation in three-dimensional displacements (see e.g. \cite{Bats3d,3DHumangrids}). It also appears that grid cells can be split into modules where the firing periodic patterns share the same scale and orientation but are shifted around an average position belonging to a firing field (i.e. the corresponding spatial region that evokes firing). Also, it is important to notice that this phenomenon occurs at different scales: the lattice spacings of the grids variy in a discrete way throughout the MEC as shown in \cite{MECdiscrete}. In this paper, we aim to design a model supporting the fact that one module is tuned to a triangular or a FCC pattern.

\medskip

Several Mathematical attempts to justify the emergence of the triangular grid for one module have been made (see e.g. \cite{GridPlaceCells,Towseetal,Gridcellseconomy,Mathisetal,dAlbis,Sorscheretal,Modelingfridcells,Anselmietal}), and the present work is inspired by the one of Mathis et al. \cite{Mathisetal} where an Information Theory point of view has been chosen. The goal is to maximize the Fisher Information's trace measuring the accuracy of grid cells representation (also called ``resolution"). The main novelty in our work compared to \cite{Mathisetal} is that our model does not simplify into a best packing problem leading to the triangular and FCC lattices as optimizers. Furthermore, we are considering more spread out Gaussian tuning curves instead of short-range bump functions. All these new assumptions transform the short-range packing problem derived in \cite{Mathisetal} into an energy maximization problem pretty much the same way as the hard-sphere crystallization question \cite{Rad2} and the best packing problems \cite{FejesToth,Viazovska,CKMRV} were transformed into a minimization question among lattices for Gaussian interactions in \cite{Mont,CKMRV2Theta}.

\medskip

We now briefly introduce our setting. The reader can refer to Section \ref{sec:Fisher} for a complete derivation and explanation of our main formula based on several assumptions that are only partially recalled in this introduction. The main statistical notions are also well explained in \cite{CoverThpas}. We assume that neurons (grid cells) fire independently following a Poisson distribution. The average firing number of a neuron is given by a lattice periodic tuning curve $\Omega^\alpha_L$ which tunes the position $x\in \R^d$ of the animal to the positive real number $\Omega^\alpha_L(x)\in \R_+$, where 
\begin{equation}
\Omega_L^\alpha(x)=\theta_{L+x}(\alpha):=\sum_{p\in L} e^{-\pi \alpha |p+x|^2},\quad \alpha>0,\quad \quad L=\bigoplus_{i=1}^d \Z u_i,
\end{equation}
the Euclidean norm being denoted by $|.|$ and $\{u_i\}_{1\leq i\leq d}$ being a basis of $\R^d$. Such infinite discrete set $L\subset \R^d$ is called a $d$-dimensional lattice and $\theta_{L+x}(\alpha)$ is called the translated lattice theta function (see also \cite{BeterminPetrache}) which quantifies a Gaussian interaction between a point $x\in \R^d$ and a lattice $L$ with Gaussian parameter $\alpha$. Notice that the constant $\pi$ is only here for technical reasons and to fit perfectly with the usual definition of theta functions. We call $\mathcal{L}_d(V)$ the set of lattices with co-volume $V$, i.e.
$$
\mathcal{L}_d(V)=\left\{L=\bigoplus_{i=1}^d \Z u_i : \{u_i\}_i \textnormal{ basis of $\R^d$}, |\det(u_1,...,u_d)|=V\right\}.
$$
Notice that the choice of a tuning curve of Gaussian type describes well (based on the central limit theorem) the spatial positions at which neurons fire, at the population level (see e.g. \cite{dAlbis,Gerlei}). Furthermore, since the translated theta function is also traditionally seen as a building blocks for a large class of lattice energies (see e.g. \cite{CohnKumar,BetTheta15,OptinonCM}), we have chosen to focus on this important Gaussian case in this paper. However, our main formula will be written in Section \ref{sec:Fisher} in terms of a general function $f$ instead of the Gaussian function $r\mapsto e^{-\pi \alpha r^2}$.

\medskip

Since a module is given by a set of translated copies of the lattice $\{L+y_j\}_{j}$ and tuning curves $\Omega^\alpha_{L+y_j}(x)$, we are assuming that the empirical measure $\delta_Y$ associated with the set of shifts $Y=\{y_j\}_j$ converges to a Radon measure $\mu$ (i.e. the firing distribution) with support $\Sigma$ (called ``firing field" in this context). We will write $\mu\in \mathcal{M}(\Sigma)$. It is well-known that $\Sigma$ scales as the lattices in the experiments (see e.g. \cite{ProgressiveScale}), in the sense that multiplying the lattice distances by $\lambda>0$ implies that we have to replace the firing field by $\lambda \Sigma$. In order to take this scaling effect into consideration, we define, for any $\lambda>0$,  the rescaled measure $\mu_\lambda$ on the Borelian set $\mathcal{B}_d$ of $\R^d$ by
\begin{equation}
\mu_\lambda(F)=\mu(\lambda F),\quad \forall F\in \mathcal{B}_d.
\end{equation}
 Figure \ref{fig:ex} depicts the situation in $\R^2$. Our goal is to decode the position (e.g. $x=0$) of the animal given the number of spikes of a neuron population. Therefore, as in \cite{Mathisetal}, we are considering the Fisher information $J_M(0)$ per neuron associated to the module $M:=(L,\Omega_L^\alpha,\mu,\Sigma)$ whose inverse of the trace is a lower bound for the error made in the decoding process (this is the so-called Cramer-Rao bound, see e.g. \cite[Thm. 11.10.1]{CoverThpas}). In our setting, the trace of $J_M(0)$ is shown (see Section \ref{sec:Fisher}) to be equal to (see also \eqref{eq:formulaFdetailed} for an expanded formula)
\begin{equation}\label{eq:Fmualpha}
\mathcal{F}^\alpha_{\mu}(L):=\int_\Sigma  \mathcal{Q}_L^\alpha(y)d\mu(y),\quad \textnormal{where}\quad    \mathcal{Q}_L^\alpha(y):=\left|\nabla_y \sqrt{\theta_{L+y}(\alpha)}\right|^2,
\end{equation}
and the question we are investigating is to find the lattice for which the decoding error is the smallest possible., i.e. the lattice $L$ maximizing $\mathcal{F}_\mu^\alpha$.

\begin{figure}[!h]
\centering
\includegraphics[width=8cm]{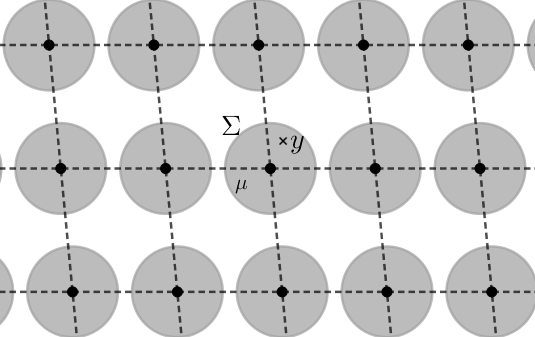} 
\caption{General configuration with arbitrary lattice $L$, firing field $\Sigma=B_R$ (repeated periodically) and a generic point $y\in \Sigma$.}
\label{fig:ex}
\end{figure}

\begin{figure}[!h]
\centering
 \includegraphics[width=8cm]{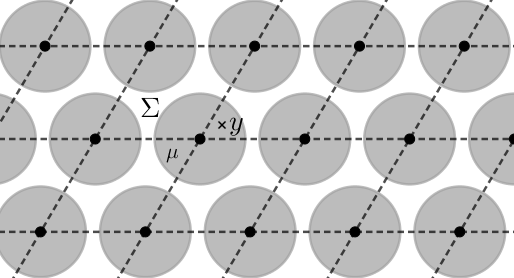}
\caption{Triangular configuration with lattice $\mathsf{A}_2$ defined by \eqref{eq:tri}, firing field $\Sigma=B_R$ (repeated periodically) and a generic point $y\in \Sigma$.}
\label{fig:tri}
\end{figure}

According to the following scaling formula (see Proposition \ref{prop:scale}) which hols for any $\alpha>0$, $\lambda>0$, $\Sigma\subset \R^d$, $\mu\in \mathcal{M}(\Sigma)$ and $L\in \mathcal{L}_d(1)$,
\begin{equation}\label{eq:scalingintro}
\mathcal{F}_{\mu_\lambda}^\alpha(\lambda L)=\lambda^{-2} \mathcal{F}_\mu^{\lambda^2 \alpha}(L),
\end{equation}
it is therefore enough to consider unit density lattices in our problem stated as follows.

\medskip

\noindent \textbf{Maximization Problem:} Given $\alpha>0$ and $\mu$, what is the maximizer of $\mathcal{F}^\alpha_{\mu}$ in $\mathcal{L}_d(1)$?

\begin{remark}[Size of firing fields and lattice spacing]\label{rmk:Brun}
It is implicit that $\Sigma$, i.e. the support of $\mu$, is also fixed in the above problem. Furthermore, it has been observed that firing fields of a single grid cell do not seem to overlap (see e.g. \cite[Fig. 1]{Haftingetal}). In particular, the ratio between the diameter of $\Sigma$ and the lattice spacing of the grid appears to be approximately constant along the MEC -- equal to $\approx 0.3$ for Brun et al. \cite{ProgressiveScale} -- and very recent experimental results by Nagele et al. \cite{Nageleetal} also suggest that this ratio might be actually slightly smaller (the firing fields are narrower). Our numerics in Section \ref{numerics} (see Figure \ref{fig:a10Rup}) will support  the fact that too much overlapping implies the non-existence of a maximizer for $\mathcal{F}_\mu^\alpha$ in $\mathcal{L}_2(1)$.
\end{remark}

In this paper, we give several types of results concerning this maximization problem: asymptotic, numerical and constrained. Indeed, it appears that finding a rigorous proof of any optimality result for $\mathcal{F}_\mu^\alpha$, where $\alpha,\mu$ are fixed, is out of reach (see for instance Section \ref{subsec:disc}). However, it is possible to study the behaviour of the functional $\mathcal{F}_\mu^\alpha$ as $\alpha\to 0$, or numerically when the parameters are fixed. Furthermore, staying in the class of lattices that are critical points of $\mathcal{F}_\mu^\alpha$ in an open interval of scales leads to comparing only a finite number of highly symmetric lattices.

\subsection{Some known results on optimal lattices}

The type of optimality problem investigated in this paper has recently attracted a lot of attention (see e.g. \cite[Sect. 2.5]{BlancLewin-2015}), especially in Mathematical Physics and for energies per point of type
\begin{equation}\label{eq:Ef}
E_f[L]:=\sum_{p\in L} f(|p|^2),\quad \textnormal{where}\quad |f(r)|=O(r^{-\frac{d}{2}-\eta}), \eta>0\quad \textnormal{as $r\to +\infty$},
\end{equation}
and where $f$ is interpreted as an interaction potential between particles. The exponential case -- i.e. where $f(r^2)=e^{-\pi \alpha r^2}$, $\alpha>0$, is a Gaussian function -- appears to be of fundamental importance. In this case, the energy defined in \eqref{eq:Ef} is called the lattice theta function 
\begin{equation}
\theta_L(\alpha):=\sum_{p\in L} e^{-\pi \alpha |p|^2},
\end{equation}
which appears to be a building block for any energy of type $E_f$ where $f=\mathcal{L}[\mu_f]$ is the Laplace transform of a measure $\mu_f$ on $\R_+$ (see e.g. \cite{CohnKumar,BetTheta15,OptinonCM}). When $\mu_f$ is a nonnegative measure, then $f$ is called completely monotone, which means that for all $r>0$ and all $k\in \N\cup \{0\}$, $(-1)^n f^{(n)}(r)\geq 0$ (this is a consequence of Hausdorff-Bernstein-Widder Theorem \cite{Bernstein-1929}). Therefore, the optimality of a lattice in $\mathcal{L}_d(1)$ for $L\mapsto \theta_L(\alpha)$ and for all $\alpha>0$ ensures the same result for $E_f$ when $f$ is completely monotone. If we allow our structures $L$ to be more general periodic configurations, then such optimality result is called universal optimality and has been recently solved in dimension $d\in \{8,24\}$ in \cite{CKMRV2Theta} where the best (densest) packings of spheres (see \cite{Viazovska,CKMRV}) $\mathsf{E}_8$ and $\Lambda_{24}$ are the unique minimizers in the respective dimensions. However, this problem is still open in dimension $d=2$ where the triangular lattice
\begin{equation}\label{eq:tri}
\mathsf{A}_2:=\sqrt{\frac{2}{\sqrt{3}}}\left[\Z(1,0)\oplus \Z\left(\frac{1}{2},\frac{\sqrt{3}}{2}  \right)  \right],
\end{equation}
which is the two-dimensional best packing (see \cite{FejesToth}), is conjectured to be the unique minimizer. When restricted to the set of lattices, the lattice theta function has been shown by Montgomery \cite{Mont} to be minimized in $\mathcal{L}_2(1)$ by $\mathsf{A}_2$ for any $\alpha>0$. Furthermore, such result cannot be true in dimension $d=3$ as explained in \cite{SarStromb} since the best candidates, which are the Face-Centred-Cubic (FCC) and Body-Centred-Cubic (BCC) lattices respectively defined by
\begin{align*}
&\mathsf{D}_3:= 2^{-\frac{1}{3}}\left[\Z(1,0,1)\oplus \Z(0,1,1)\oplus \Z(1,1,0)  \right]\\
& \mathsf{D}_3^*:=2^{\frac{1}{3}}\left[\Z(1,0,0)\oplus \Z(0,1,0)\oplus \Z\left(\frac{1}{2},\frac{1}{2},\frac{1}{2}  \right)  \right]
\end{align*}
are dual of each other  but not unimodular, i.e. $\mathsf{D}_3^*\neq \mathsf{D}_3$. We recall that the dual lattice of $L$ is defined by $L^*=\{y\in \R^d : y\cdot p\in \Z, \forall p\in L\}$. Let us also recall the results of \cite{BeterminPetrache,Beterminlocal3d} where $\mathsf{D}_3$ (resp. $\mathsf{D}_3^*$) has been shown to be a strict local minimizer of $L\mapsto \theta_L(\alpha)$ in $\mathcal{L}_3(1)$ for sufficiently large (resp. small) $\alpha$ and a saddle point for sufficiently small (resp. large) $\alpha$.

\medskip

The extrema of the translated theta function $(L,y)\mapsto\theta_{L+y}(\alpha)$ have also been studied. On the one hand, all the critical points of $y\mapsto \theta_{L+y}(\alpha)$ as well as their nature are known when $L$ is an orthorhombic (see \cite[Sect. III.3]{BeterminPetrache}) or triangular lattice (see \cite{Baernstein-1997}) and for all $\alpha>0$, see also Section \ref{subsec:disc}. For general lattices, only the trivial critical points are known (center of the unit cell and midpoints). Therefore, all the zeros of $\mathcal{Q}_L^\alpha$ are also known in these cases. On the other hand, only few results are available concerning the extrema of $L\mapsto \theta_{L+y}(\alpha)$ for fixed $y\neq 0$. In the case where $y=c_L$ is the center of the unit cell of $L$, $L\mapsto \theta_{L+c_L}(\alpha)$ does not have any minimizer (see \cite[Prop. 1.3]{BeterminPetrache}) and the maximizer among two-dimensional lattices (resp. among $d$-dimensional orthorhombic lattices) with the same density is the triangular lattice $\mathsf{A}_2$ (resp. the simple cubic lattice $\Z^d$) as we have shown in \cite{MaxTheta1} (resp. in \cite[Thm. 1.4]{BeterminPetrache} by mainly using tools from \cite{Faulhuber:2016aa}). Notice that other two-dimensional results of this type might be derived from new discoveries concerning combinations of theta functions by Luo and Wei in \cite{LuoWeiBEC}. 

\medskip

Finally, concerning the averaging of the shifts for the lattice theta functions, i.e. for energies of the form $\int_\Sigma \theta_{L+y}(\alpha)d\mu(y)$, recent advances have been made in \cite{BetKnupfdiffuse,BetSoftTheta} when $\mu$ is a radially symmetric measure sufficiently rescaled around the origin or generated by a completely monotone kernel $\rho$ as $d\mu(x)=\rho(|x|^2)dx$. In particular, the above mentioned results \cite{Mont,BeterminPetrache,Beterminlocal3d,CKMRV2Theta} for the lattice theta function $L\mapsto \theta_L(\alpha)$ on $\mathcal{L}_d(1)$ still hold in these particular cases for $L\mapsto \int_\Sigma \theta_{L+y}(\alpha)d\mu(y)$. The asymptotic results presented in the next section are actually based on these facts combined with the work of Regev and Stephens-Davidowitz \cite{Regev:2015kq} on the growth of (translated) lattice theta functions.

\subsection{Numerical and asymptotic results}

Whereas $\mathcal{F}_\mu^\alpha$ defined by \eqref{eq:Fmualpha} has the translated lattice theta function $\theta_{L+y}(\alpha)$ as a building block, it cannot a priori be considered like a lattice energy of type $E_f$ defined by \eqref{eq:Ef}. It is also clear that finding the global maximizer of  $\mathcal{F}_\mu^\alpha$ using an analytic or variational method appears to be out of reach but numerically accessible. Moreover, some asymptotic results can be shown.

\medskip

It is indeed usual to study the asymptotic behaviour with respect to $\alpha$ of a problem involving lattice theta functions (see e.g. \cite[Thm. 1.6]{BeterminPetrache}). When $\alpha\to 0$, an alternative formula (see \eqref{eq:altF}) using a recent result by Regev and Stephens-Davidowitz in \cite{Regev:2015kq} combined with new results on the soft lattice theta function \cite{BetKnupfdiffuse,BetSoftTheta} lead to the following asymptotic result.

\begin{thm}[\textbf{The small $\alpha$ case - Non-maximality results}]\label{thm:alpha0epsilon0}
 Let $\mu\in \mathcal{M}(\Sigma)$ be radially symmetric with support $\Sigma=B_R\subset \R^2$, for some $R>0$. Then:
\begin{enumerate}
\item For all $L\in \mathcal{L}_2(1)$, $\exists \alpha_0>0$, $\exists \lambda_0>0$, such that for all $\alpha\in (0,\alpha_0)$, for all $\lambda\in (0,\lambda_0)$, 
$$
\mathcal{F}^\alpha_{\mu_\lambda}(L)>\mathcal{F}^\alpha_{\mu_\lambda}(\mathsf{A}_2).
$$
\item For $\mu$ such that $d\mu(x)=\rho(|x|^2)dx$, where $\rho$ is a completely monotone function and all  $L\in \mathcal{L}_2(1)$  there exists $\alpha_0$ such that for all $\alpha\in (0,\alpha_0)$, $\mathcal{F}^\alpha_{\mu}(L)>\mathcal{F}^\alpha_{\mu}(\mathsf{A}_2)$.
\item There exists $\alpha_1>0$ and $\lambda_1>0$ such that for all $\alpha\in(0,\alpha_1)$ and $\lambda\in (0,\lambda_1)$, $\mathcal{F}^\alpha_{\mu_\lambda}$ does not have any maximizer in $\mathcal{L}_2(1)$.
\end{enumerate} 
 In dimension $d=3$, the two first points hold for $L$ in an open ball of $\mathcal{L}_3(1)$ centred at $\mathsf{D}_3^*$ and the third point also holds.
\end{thm}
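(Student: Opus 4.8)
The plan is to pass to the dual lattice and extract the leading behaviour of $\mathcal{F}^\alpha_\mu$ as $\alpha\to 0$. Writing $\mathcal{Q}_L^\alpha(y)=\frac{|\nabla_y\theta_{L+y}(\alpha)|^2}{4\,\theta_{L+y}(\alpha)}$ (from $\nabla_y\sqrt{\theta_{L+y}}=\nabla_y\theta_{L+y}/(2\sqrt{\theta_{L+y}})$), I would first apply the Jacobi/Poisson theta transformation to the building block, so that for $L\in\mathcal{L}_d(1)$,
\[
\theta_{L+y}(\alpha)=\alpha^{-d/2}\sum_{k\in L^*}e^{-\pi|k|^2/\alpha}\cos(2\pi k\cdot y),
\]
and substitute this into $\mathcal{Q}_L^\alpha$ to obtain the alternative formula \eqref{eq:altF}. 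As $\alpha\to 0$ the $k=0$ term dominates the denominator, while the gradient in the numerator is driven by the shortest nonzero vectors of $L^*$, yielding the heuristic leading order
\[
\mathcal{F}^\alpha_\mu(L)\sim C(L,\mu)\,\alpha^{-d/2}\,e^{-2\pi r_{\min}(L^*)^2/\alpha},\qquad \alpha\to 0,
\]
where $r_{\min}(L^*)$ is the length of the shortest dual vector and $C(L,\mu)>0$ is the average of the minimal–vector contributions against $\mu$. The work of Regev and Stephens-Davidowitz \cite{Regev:2015kq} on the growth of translated theta functions is what I would invoke to justify that these minimal–dual–vector terms genuinely dominate, uniformly enough to differentiate and divide.

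The comparison with $\mathsf{A}_2$ then reduces to a statement about $r_{\min}(L^*)$: since the exponential rate $r_{\min}(L^*)^2$ controls the leading order, the lattice with the \emph{largest} $r_{\min}(L^*)$ produces the fastest decay and hence the \emph{smallest} Fisher Information. In $\R^2$ the triangular lattice is self-dual and uniquely maximizes $r_{\min}$ (the densest packing, \cite{FejesToth}), equivalently it uniquely minimizes the soft theta function $L\mapsto\int_\Sigma\theta_{L+y}(\alpha)d\mu(y)$ by Montgomery \cite{Mont} together with \cite{BetSoftTheta,BetKnupfdiffuse}. Thus every non-triangular $L$ satisfies $r_{\min}(L^*)<r_{\min}(\mathsf{A}_2^*)$, and the ratio $\mathcal{F}^\alpha_\mu(L)/\mathcal{F}^\alpha_\mu(\mathsf{A}_2)\to+\infty$ as $\alpha\to 0$, giving points 1 and 2. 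The rescaling $\lambda\to 0$ in point 1 is used to bring a general radial $\mu$ into the range of applicability of these soft-theta optimality results; it is unnecessary for the completely monotone case of point 2, where they hold at every scale. In dimension three the same mechanism applies, but the relevant extremality of $r_{\min}(L^*)$ — equivalently the local minimality of $L\mapsto\int_\Sigma\theta_{L+y}(\alpha)d\mu(y)$ — is only known locally, at $L=\mathsf{D}_3^*$ (so that $L^*=\mathsf{D}_3$ is the FCC packing), through \cite{BeterminPetrache,Beterminlocal3d}; this is exactly what restricts the conclusion to an open ball around $\mathsf{D}_3^*$.

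For the non-existence statement (point 3) I would exhibit a family of lattices degenerating towards the boundary of $\mathcal{L}_2(1)$ — lattices becoming arbitrarily thin, so that $r_{\min}(L^*)\to 0$ — and show that along it $\mathcal{F}^\alpha_{\mu_\lambda}$ approaches its supremal value while strictly exceeding the value at every fixed non-degenerate lattice, so that no maximizer can exist. The role of $\lambda\to 0$ here is to make the firing field $B_{R/\lambda}$ large, creating strong overlap of the periodically repeated fields (cf. Remark \ref{rmk:Brun}); in this regime $\int\mathcal{Q}_L^\alpha\,d\mu_\lambda$ is governed by the torus average of $\mathcal{Q}_L^\alpha$, which I would estimate along the degenerating family to show that the supremum is a boundary value that is not attained. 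The three-dimensional version follows the identical scheme with thin three-dimensional lattices.

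The main obstacle I anticipate is making the small-$\alpha$ expansion rigorous and quantitative: one must control the error after differentiating and dividing by $\theta_{L+y}(\alpha)$, verify that the leading coefficient $C(L,\mu)$ is strictly positive (non-degeneracy of the minimal–vector contribution against a radial measure), and — hardest of all — handle the two limits $\alpha\to 0$ and $\lambda\to 0$ simultaneously and uniformly, since the concentrated and spread-out regimes of $\mu$ are governed by different terms of the dual expansion. The non-existence argument is delicate for the same reason, as it requires understanding $\mathcal{F}^\alpha_{\mu_\lambda}$ uniformly along a non-compact degenerating family rather than at a single lattice.
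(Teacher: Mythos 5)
Your mechanism is correct in outline, but it is genuinely different from the paper's proof, and it works at a finer scale. The paper's argument is very short: starting from the alternative formula \eqref{eq:altF}, it asserts that as $\alpha\to 0$ one has $\mathcal{F}^\alpha_{\mu_\lambda}(L)\sim 4\pi^2\alpha^2\int_\Sigma\theta_{L+y}(\alpha)\,d\mu_\lambda(y)$, and then quotes the minimality results for the averaged theta function --- \cite{BetKnupfdiffuse} for radial measures sufficiently concentrated near the origin (this is the only role of $\lambda_0$), \cite{BetSoftTheta} for completely monotone densities, and the corresponding local statement at $\mathsf{D}_3^*$ in dimension $3$ --- while point 3 comes from a sequence of degenerate orthorhombic lattices along which the theta function blows up; every hypothesis of the theorem is inherited from those cited results. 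You instead extract the leading order by Poisson summation, $\mathcal{F}^\alpha_\mu(L)\approx C(L,\mu)\,\alpha^{-d/2}e^{-2\pi r_{\min}(L^*)^2/\alpha}$, and compare exponential rates, reducing points 1 and 2 to the dual packing problem. These are not the same argument, and yours is the more precise one: by Poisson summation $\nabla_y\theta_{L+y}(\alpha)$ is exponentially small as $\alpha\to 0$, hence $\mathbb{E}_{w\sim D_{L+y,\alpha}}[w]\to 0$, so the paper's displayed equivalence cannot hold literally (the two sides differ by the exponentially small factor $\bigl|\mathbb{E}_{w\sim D_{L+y,\alpha}}[w]\bigr|^2$), and the lattice comparison genuinely takes place at the exponentially small scale your expansion isolates --- the leading term $\alpha^{-d/2}\mu_\lambda(\Sigma)$ of the soft theta function is lattice-independent. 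What the paper's route buys is brevity, by outsourcing the analysis to published theorems. What yours buys, once you control the truncation error (all dual shells beyond the first give an exponentially small relative contribution, uniformly on a compact firing field) and verify $C(L,\mu)>0$ (the minimal-vector sum $\sum_{|q|=r_{\min}(L^*)}q\sin(2\pi q\cdot y)$ is a nonzero real-analytic function of $y$, whose zero set is closed with empty interior and so cannot carry a measure whose support is all of $B_R$), is a strictly stronger theorem: no smallness of $\lambda$ in point 1, no complete monotonicity in point 2, and a global rather than local statement in dimension 3, since $\mathsf{D}_3$ is the unique densest lattice packing of $\R^3$, so that $r_{\min}(L^*)<r_{\min}(\mathsf{D}_3)$ for every $L\in\mathcal{L}_3(1)$ not isometric to $\mathsf{D}_3^*$. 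The uniformity in $\lambda$ that you worry about is also harmless in your route: every dual contribution to $\nabla_y\theta_{L+y}$ vanishes linearly at $y=0$, so both $C(L,\mu_\lambda)$ and the error scale like $\lambda^2$ and the comparison factors through.

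Three things to repair in your write-up. (i) Maximizing $r_{\min}(L^*)$ is not ``equivalent'' to minimizing $L\mapsto\int_\Sigma\theta_{L+y}(\alpha)\,d\mu(y)$; your argument needs only the classical packing statements (\cite{FejesToth} in dimension 2, Gauss in dimension 3) and should not invoke \cite{Mont}, \cite{BetKnupfdiffuse,BetSoftTheta}, or \cite{Regev:2015kq} (plain Poisson summation with crude tail bounds suffices). Importing them re-introduces exactly the hypotheses your method avoids, and it is why your explanation of the restriction to a ball around $\mathsf{D}_3^*$ contradicts your own mechanism, which gives the global 3D statement. (ii) Your reading of $\mu_\lambda$ as supported on $B_{R/\lambda}$ follows the paper's literal definition, but the change of variables in the proof of Proposition \ref{prop:scale} (and the applicability of \cite{BetKnupfdiffuse}) shows the intended convention is the pushforward one, with $\mathrm{supp}\,\mu_\lambda=B_{\lambda R}$ shrinking to the origin; the small-$\lambda$ hypothesis exists to serve the soft-theta route, not to create overlap. (iii) Consequently, your torus-averaging picture for point 3 is not needed: for $L_t=\Z(1/t,0)\oplus\Z(0,t)$ the theta function factorizes and a direct computation gives $\mathcal{Q}^\alpha_{L_t}(y)\sim t\,\pi^2\alpha^{3/2}\,y_2^2e^{-\pi\alpha y_2^2}\to\infty$ as $t\to\infty$, so $\mathcal{F}^\alpha_{\mu_\lambda}(L_t)\to\infty$ for \emph{every} fixed $\alpha>0$ and $\lambda>0$ and every radial $\mu$ not concentrated at the origin. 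This is the same degeneration mechanism the paper uses, it requires no uniform control along the family beyond this single explicit estimate, and it shows that the smallness assumptions in point 3 are not actually needed.
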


This result agrees with the numerics we have performed for small $\alpha$ and where the triangular lattice minimizes $\mathcal{F}_\mu^\alpha$ (see Figure \ref{Gaussiana2R01}). Furthermore, our main numerical findings are the following (see Section \ref{numerics}), choosing $\Sigma=B_R$, $\mu=\sigma_R$ being the uniform measure on $B_R$ and $\alpha=\frac{10}{\pi}$:
\begin{itemize}
\item \textbf{Dimension 2. Maximality of the triangular lattice.}
\begin{itemize}
\item \textbf{Global optimality of the triangular lattice.} The triangular lattice $\mathsf{A}_2$ is the unique maximizer of $\mathcal{F}_\mu^\alpha$ in $\mathcal{L}_2(1)$ for $R\in \left\{0.1,0.2,0.3,0.4,0.5, \frac{1}{2}\sqrt{\frac{2}{\sqrt{3}}}\right\}$. The last value of $R$ corresponds to half of the side length of $\mathsf{A}_2$.
\item \textbf{Non-existence of a maximizer for large $R$.} If $R\geq 0.59$,  then $\mathcal{F}_\mu^\alpha$ does not have a maximizer in $\mathcal{L}_2(1)$ and $\mathsf{A}_2$ is its unique minimizer in $\mathcal{L}_2(1)$.
\item \textbf{Comparison with experimental values from \cite{ProgressiveScale}.} If $R=0.16$, which corresponds to the case where the ratio between the diameter of $\Sigma$ and the lattice spacing of the triangular lattice is $0.3$ (as suggested by \cite{ProgressiveScale}), we observe that $\mathsf{A}_2$ is the unique maximizer of $\mathcal{F}_\mu^{\bar{\alpha}}$ in $\mathcal{L}_2(1)$ if $\bar{\alpha}>\bar{\alpha}_0\approx 1.25$. For $\bar{\alpha}\leq \bar{\alpha}_0$, $\mathcal{F}_\mu^{\bar{\alpha}}$ does not seem to have a maximizer in $\mathcal{L}_2(1)$ and $\mathsf{A}_2$ is a minimizer in $\mathcal{L}_2(1)$.
\end{itemize} 
\item \textbf{Dimension 3. Maximality of the FCC lattice} 
\begin{itemize}
\item \textbf{Comparison of cubic lattices.} There exists $R_0\approx 0.57$ such that for all $R\leq R_0$, 
$$
\mathcal{F}_\mu^\alpha(\mathsf{D}_3)>\mathcal{F}_\mu^\alpha(\mathsf{D}_3^*)>\mathcal{F}_\mu^\alpha(\Z^3).
$$
\item \textbf{Local maximality of the FCC lattice.} $\mathsf{D}_3$ is a local maximizer of $\mathcal{F}_\mu^\alpha$ in $\mathcal{L}_3(1)$ for $R\in \{0.1,0.2,0.3,0.4,0.5, 2^{-5/6}\}$.  The last value of $R$ corresponds to half of the side length of $\mathsf{D}_3$.
\end{itemize}
\end{itemize}

According to the scaling formula \eqref{eq:scalingintro}, the same qualitative behaviour holds for any other value of $\alpha$ with rescaled quantities.

\subsection{Characterization of volume-stationary lattices}

In \cite{MECdiscrete}, it has been observed that rats have grid cells with grid spacing from 35.2cm to 171.7cm, from dorsal to ventral MEC. The set of possible scales has been shown to be discrete for all the studied species, with different values. If we believe that grid cells are universal among a lot of animals, in such a way that grid scales are covering all the possible values of a certain (open) interval, we can focus on lattices $L$ that are critical points (or maximizer) of  $\mathcal{F}_{\mu}$ in $\mathcal{L}_d(V)$ for all $V$ in an open interval. We show a result which is similar to the one obtained in \cite[Sect. III]{LBMorse} for energies of type $E_f$ defined by \eqref{eq:Ef}, using tools from \cite{DeloneRysh,Gruber}.

\medskip

We first recall the notion of strongly eutactic layer exactly as we have done it in \cite[Sect. III]{LBMorse}.

\begin{defi}[\textbf{Strongly eutactic layer}]\label{def:eutactic}
Let $L\in \mathcal{L}_d(1)$. We say that a layer $\mathfrak{m}=\{p\in L ; |p|=\lambda\}$, for some $\lambda>0$, of $L$ is strongly eutactic if $\sharp \mathfrak{m}=2k$ for some $k\in \N$ and, for any $x\in \R^d$, 
$$
\sum_{p\in \mathfrak{m}} \frac{(p\cdot x)^2}{|p|^2}=\frac{2k}{d}|x|^2.
$$
\end{defi}
\begin{remark}
After a suitable renormalization, $\mathfrak{m}$ is also called a spherical $2$-design (see e.g. \cite{BachocVenkov}).
\end{remark}

\begin{thm}[\textbf{Volume-stationary lattices for $\mathcal{F}^\alpha_{\mu}$}]\label{thm:density}
We assume that $\Sigma\subset \R^d$ is compact and $\mu\in \mathcal{M}(\Sigma)$ is radially symmetric. Then, $L\subset \R^d$ is a critical point of $L\mapsto \mathcal{F}_{\mu_\lambda}(\lambda L)$ in $\mathcal{L}_d(1)$ for all $\lambda$ in an open interval $I$ if and only if all the layers of $L$ are strongly eutactic and $I=(0,\infty)$. In particular, $L\in \{\mathsf{A}_2,\Z^2,\Z^3,\mathsf{D}_3,\mathsf{D}_3^*\}$ in dimension $d\in \{2,3\}$.
\end{thm}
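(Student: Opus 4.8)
The plan is to mirror the scheme of \cite[Sect.~III]{LBMorse}, replacing the genuine lattice sum $E_f$ by the rational functional $\mathcal{F}$. First I would use the scaling identity \eqref{eq:scalingintro}: for $L\in\mathcal{L}_d(1)$ one has $\mathcal{F}_{\mu_\lambda}(\lambda L)=\lambda^{-2}\mathcal{F}_\mu^{\lambda^2\alpha}(L)$, so that $L$ is a critical point of $L\mapsto\mathcal{F}_{\mu_\lambda}(\lambda L)$ in $\mathcal{L}_d(1)$ if and only if it is a critical point of $L\mapsto\mathcal{F}_\mu^{\beta}(L)$ with $\beta=\lambda^2\alpha$; since $\lambda\mapsto\lambda^2\alpha$ is a diffeomorphism of $(0,\infty)$, ``critical for all $\lambda\in I$'' becomes ``critical for all $\beta$ in an open interval $J$''. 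I then parametrise a neighbourhood of $L$ in $\mathcal{L}_d(1)$ by $L\mapsto e^{tH}L$ with $H$ symmetric and traceless (so $\det e^{tH}=1$) and write the shape derivative as $\frac{d}{dt}\big|_{0}\mathcal{F}_\mu^\beta(e^{tH}L)=H:M_L(\beta)$, where $A:B=\mathrm{tr}(A^\top B)$ and $M_L(\beta)$ is a symmetric matrix, well defined up to a multiple of $\mathrm{Id}$; criticality in $\mathcal{L}_d(1)$ is then equivalent to the vanishing of its traceless part $M_L(\beta)^{\mathrm{tls}}$. Two facts are recorded. Because $\mu$ is radial one checks $\mathcal{Q}_{OL}^\beta(y)=\mathcal{Q}_L^\beta(O^{-1}y)$ and, changing variables, $\mathcal{F}_\mu^\beta(OL)=\mathcal{F}_\mu^\beta(L)$ for every $O\in O(d)$; differentiating gives $M_{OL}(\beta)^{\mathrm{tls}}=O M_L(\beta)^{\mathrm{tls}}O^\top$, so $M_L(\beta)^{\mathrm{tls}}$ commutes with the point group $\mathrm{Aut}(L)$. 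Moreover $\theta_{L+y}(\beta)$ and its $y$-derivatives are Gaussian series converging locally uniformly in $\beta$ with $\theta_{L+y}(\beta)>0$ (cf.\ \cite{Regev:2015kq}), and $\mu$ is finite with compact support, so $\beta\mapsto M_L(\beta)^{\mathrm{tls}}$ is real-analytic on $(0,\infty)$. Hence vanishing on $J$ forces vanishing on all of $(0,\infty)$, which is exactly the assertion that the maximal interval of criticality must be $I=(0,\infty)$.

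The core is then to identify ``$M_L(\beta)^{\mathrm{tls}}=0$ for all $\beta>0$'' with the strong eutaxy of every layer. Writing $u(y)=\sqrt{\theta_{L+y}(\beta)}$, so that $\mathcal{F}_\mu^\beta=\int_\Sigma|\nabla u|^2\,d\mu$, the variation of $u$ under $L\mapsto e^{tH}L$ is $\delta u=\delta\theta/(2u)$ with the \emph{single} lattice sum $\delta\theta=-2\pi\beta\sum_{p\in L}\big(Hp\cdot(p+y)\big)e^{-\pi\beta|p+y|^2}$. A convenient way to compute $M_L$ is to integrate by parts in $y$ (for $d\mu=\rho\,dy$, the general radial $\mu$ being treated by approximation), which turns the Dirichlet form into
\[ H:M_L(\beta)=\delta\mathcal{F}_\mu^\beta=-\int_\Sigma \delta\theta\,\Psi_\beta\,dy,\qquad \Psi_\beta:=\frac{\mathrm{div}(\rho\,\nabla u)}{u}, \]
so that \emph{all cross terms between distinct lattice points disappear} and $M_L(\beta)=2\pi\beta\sum_{p\in L}\mathrm{sym}\!\big((F_\beta(p)\,p+w_\beta(p))\otimes p\big)$, where $F_\beta(p)=\int_\Sigma e^{-\pi\beta|p+y|^2}\Psi_\beta(y)\,dy$ and $w_\beta(p)=\int_\Sigma y\,e^{-\pi\beta|p+y|^2}\Psi_\beta(y)\,dy$.

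The aim is then to reduce $M_L(\beta)^{\mathrm{tls}}$ to a layer sum. If the weight $\Psi_\beta$ were radial, then by a reflection argument $F_\beta(p)$ would depend only on $|p|$ and $w_\beta(p)$ would be parallel to $p$, whence $M_L(\beta)=2\pi\beta\sum_{\mathfrak{m}}c_\mathfrak{m}(\beta)\sum_{p\in\mathfrak{m}}p\otimes p$ for scalar coefficients $c_\mathfrak{m}(\beta)$ indexed by the layers $\mathfrak{m}=\{p\in L:|p|=\lambda\}$. Writing $B_\mathfrak{m}:=\sum_{p\in\mathfrak{m}}p\otimes p-\frac{\lambda^2\sharp\mathfrak{m}}{d}\mathrm{Id}$, strong eutaxy of $\mathfrak{m}$ is precisely $B_\mathfrak{m}=0$ (Definition \ref{def:eutactic}), and since the rates $e^{-\pi\beta\lambda_\mathfrak{m}^2}$ attached to distinct layer norms are linearly independent functions of $\beta$, the identity $M_L(\beta)^{\mathrm{tls}}\equiv0$ on $(0,\infty)$ would force $B_\mathfrak{m}=0$ for every layer by peeling the shells off from the shortest one outward; conversely $B_\mathfrak{m}=0$ for all $\mathfrak{m}$ makes every term vanish, giving criticality at all scales.

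I expect the genuine obstacle to be precisely this last reduction, which is absent from \cite{LBMorse}: because of the normalisation $1/u=1/\sqrt{\theta_{L+y}(\beta)}$ the weight $\Psi_\beta$ is \emph{not} radial but only $\mathrm{Aut}(L)$-invariant, so a priori $F_\beta(p)$ and $w_\beta(p)$ see the full $\mathrm{Aut}(L)$-orbit of $p$ rather than just $|p|$, and the rank-one tensors attached to one layer may split over several orbits. The heart of the proof is to show that this splitting does not affect the traceless part at the orders relevant for the rate-separation, so that the stationarity of $\mathcal{F}_\mu^\beta$ still collapses layer-by-layer onto the same eutaxy condition as for $E_f$. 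I would establish this by combining the analyticity in $\beta$ with the two asymptotic regimes already used in the paper — the $\beta\to\infty$ Laplace expansion of $F_\beta,w_\beta$ (for fixed $p$, governed by the lattice point of $L$ closest to $-y$ as $y$ ranges over the compact $\Sigma$) and the $\beta\to0$ Poisson-summation expansion of Theorem \ref{thm:alpha0epsilon0} — which organise the weights by the layer norms and allow the shells to be isolated. Once the characterisation ``volume-stationary at all scales $\Leftrightarrow$ every layer strongly eutactic'' is in place, the ``in particular'' statement follows exactly as in \cite[Sect.~III]{LBMorse}, using the lattice-geometric classification tools of \cite{DeloneRysh,Gruber} (see also \cite{BachocVenkov}): the lattices all of whose shells are spherical $2$-designs are $\mathsf{A}_2$ and $\Z^2$ in dimension $2$, and $\Z^3,\mathsf{D}_3,\mathsf{D}_3^*$ in dimension $3$.
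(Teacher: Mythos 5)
Your opening steps reproduce the paper's: by the scaling formula \eqref{eq:scalingintro}, criticality at scales $\lambda\in I$ becomes criticality of $\mathcal{F}_\mu^{\beta}$ for $\beta=\lambda^2\alpha$ in an interval, and analyticity of the shape derivative in $\beta$ upgrades vanishing on an interval to vanishing on all of $(0,\infty)$, so $I=(0,\infty)$. The gap lies in what you yourself call the heart of the proof: identifying ``$M_L(\beta)^{\mathrm{tls}}\equiv 0$ for all $\beta$'' with strong eutaxy of every layer. Your peeling argument needs the weights $F_\beta(p)$, $w_\beta(p)$ to (i) depend on $p$ only through $|p|$ and (ii) carry separated rates $e^{-\pi\beta\lambda_{\mathfrak{m}}^2}$ with $\beta$-independent coefficients, so that distinct layers can be isolated by linear independence in $\beta$. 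Neither holds: $\Psi_\beta$ contains the factor $1/\sqrt{\theta_{L+y}(\beta)}$, which is only $\mathrm{Aut}(L)$-invariant and entangles \emph{every} layer of $L$ into \emph{each} weight; writing $F_\beta(p)=e^{-\pi\beta|p|^2}\int_\Sigma e^{-\pi\beta(2p\cdot y+|y|^2)}\Psi_\beta(y)\,dy$ shows the prefactor of $e^{-\pi\beta|p|^2}$ still depends on the direction of $p$ and on $\beta$, so no rate separation is available. Your proposed fix --- a Laplace expansion as $\beta\to\infty$ and a Poisson expansion as $\beta\to0$ to ``isolate the shells'' --- is a program, not an argument: as $\beta\to\infty$ the expansion of $F_\beta(p),w_\beta(p)$ is organized by the distances $|p+y|$, $y\in\Sigma$, not by the layer norms $|p|$, and you give no mechanism forcing the non-radial corrections out of the traceless part. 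Two further technical problems: your integration by parts discards boundary terms on $\partial\Sigma$ (nonzero already for the uniform measure on a ball), and the claim that ``all cross terms between distinct lattice points disappear'' is inaccurate --- they are merely repackaged inside $\Psi_\beta$.

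The paper does not confront this obstacle; it removes it before any layer decomposition is attempted. By the alternative formula \eqref{eq:altF}, $\mathcal{F}_{\mu_\lambda}^\alpha(\lambda L)=4\pi^2\lambda^2\alpha^2\int_\Sigma\theta_{L+y}(\lambda^2\alpha)\,\bigl|\mathbb{E}_{w\sim D_{L+y,\lambda^2\alpha}}[w]\bigr|^2 d\mu(y)$, and as $\lambda^2\alpha\to0$ the squared expectation tends to a constant; hence, for small $\lambda$, criticality of $\mathcal{F}$ passes to criticality of the plain averaged theta function $\int_\Sigma\theta_{L+y}(\lambda^2\alpha)\,d\mu(y)$ --- precisely the functional in which your troublesome normalization $1/\sqrt{\theta_{L+y}}$ is absent. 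From there the paper proceeds: Poisson summation gives $(\lambda^2\alpha)^{-d/2}\sum_{q\in L^*}e^{-\pi|q|^2/(\lambda^2\alpha)}\widehat{\mu}(q)$; radial symmetry of $\mu$ makes the weight $\widehat{\mu}(q)=h(|q|)$ radial, so the classical rate-separation argument of \cite{DeloneRysh,Gruber,LBMorse} now applies; analyticity extends \eqref{eq:critic} to all scales; a Laplace-transform step passes to the Epstein zeta function, and \cite[Corollary 1]{Gruber} yields strong eutaxy of all layers of the \emph{dual} $L^*$. Note also that the paper's route lands on $L^*$, not on $L$ as your computation aims to; the statement for $L$ closes only because the classified list in dimensions $2$ and $3$ (\cite[Corollary 2]{Gruber}) is stable under duality. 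In short, what is missing from your proposal is exactly the idea the paper uses: an asymptotic linearization of the functional via \cite{Regev:2015kq}, which replaces the Fisher information by an averaged theta function with radial Fourier weights, after which eutaxy follows from known results.
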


Therefore, restricting our maximization problem to volume-stationary lattices is very simple since one have only to compare the lattices belonging to $\{\mathsf{A}_2,\Z^2,\Z^3,\mathsf{D}_3,\mathsf{D}_3^*\}$. Furthermore, Theorem \ref{thm:density} tells us that $\mathsf{A}_2,\Z^2,\Z^3,\mathsf{D}_3$ and $\mathsf{D_3}^*$ are critical points of $\mathcal{F}_\mu^\alpha$ for any $\alpha>0$ and $\mu$ radially symmetric on a ball $\Sigma=B_R$ for any $R>0$. In this case, combined with our numerical findings stated above (see also Section \ref{numerics}), we obtain new evidences that $\mathsf{A}_2$ and $\mathsf{D}_3$ are optimal for the grid cells problem if we restrict our study to volume-stationary lattices and radially symmetric firing field and measure $\mu$. More precisely, our numerics suggest that, for $\Sigma=B_R$ and $\mu=\sigma_R$ being the uniform measure on $B_R$, we have to following.
\begin{itemize}
\item For all fixed $R>0$, there exists $\alpha_R=\alpha(R,d)$ such that
\begin{itemize}
\item in dimension $d=2$, $\mathcal{F}_\mu^\alpha(\mathsf{A}_2)>\mathcal{F}_\mu^\alpha(\Z^2)$ (resp. $\mathcal{F}_\mu^\alpha(\mathsf{A}_2)<\mathcal{F}_\mu^\alpha(\Z^2)$) if $\alpha>\alpha_R$ (resp. if $\alpha<\alpha_R$);
\item in dimension $d=3$, for all $\alpha>\alpha_R$, $\mathcal{F}_\mu^\alpha(\mathsf{D}_3)>\mathcal{F}_\mu^\alpha(\mathsf{D}_3^*)>\mathcal{F}_\mu^\alpha(\Z^3)$. Furthermore, there exists $\tilde{\alpha}_R=\tilde{\alpha}(R)$ such that for all $\alpha<\alpha_R$, $\mathcal{F}_\mu^\alpha(\mathsf{D}_3)<\mathcal{F}_\mu^\alpha(\mathsf{D}_3^*)<\mathcal{F}_\mu^\alpha(\Z^3)$.
\end{itemize}
\item  For all $\alpha>\alpha_d$ for some $\alpha_d>0$,  there exists $R_\alpha=R(\alpha,d)$ such that
\begin{itemize}
\item in dimension $d=2$, $\mathcal{F}_\mu^\alpha(\mathsf{A}_2)>\mathcal{F}_\mu^\alpha(\Z^2)$ (resp. $\mathcal{F}_\mu^\alpha(\mathsf{A}_2)<\mathcal{F}_\mu^\alpha(\Z^2)$) if $R<R_\alpha$ (resp. if $R>R_\alpha$);
\item in dimension $d=3$, for all $R<R_\alpha$, $\mathcal{F}_\mu^\alpha(\mathsf{D}_3)>\mathcal{F}_\mu^\alpha(\mathsf{D}_3^*)>\mathcal{F}_\mu^\alpha(\Z^3)$. Furthermore, there exists $\tilde{R}_\alpha=\tilde{R}(\alpha)>0$ such that, for all $R>\tilde{R}_\alpha$, $\mathcal{F}_\mu^\alpha(\mathsf{D}_3)<\mathcal{F}_\mu^\alpha(\mathsf{D}_3^*)<\mathcal{F}_\mu^\alpha(\Z^3)$.
\end{itemize}
\end{itemize}

\subsection{Conclusion and open problems}

We conclude that, whereas the rigorous study of $\mathcal{F}_\mu^\alpha$ stays a very challenging problem, our asymptotic and numerical investigations show new evidences of the optimality of the triangular and face-centred-cubic lattices for the grid cells problem in dimension 2 and 3. In particular, our results suggest the following conjecture that also covers the experimental case where $R=0.16$ (see Remark \ref{rmk:Brun} on the size of the firing fields).

\begin{Conjecture}[\textbf{Maximality of the best packing in dimension 2 and 3}]
Let $\sigma_R$ be the uniform measure on $B_R$ and $\alpha=\frac{10}{\pi}$. Furthermore, let $R_2:=\frac{1}{2}\sqrt{\frac{2}{\sqrt{3}}}$ and $R_3:=2^{-\frac{5}{6}}$. Then we have:
\begin{enumerate}
\item For all $R\leq R_2$, $\mathsf{A}_2$ is the unique maximizer of $\mathcal{F}_{\sigma_R}^\alpha$ in $\mathcal{L}_2(1)$.
\item For all $R\leq R_3$, $\mathsf{D}_3$ is the unique maximizer of $\mathcal{F}_{\sigma_R}^\alpha$ in $\mathcal{L}_3(1)$.
\end{enumerate}
\end{Conjecture}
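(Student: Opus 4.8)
I treat the statement as the (hard) conjecture it is: a fully rigorous argument at the single value $\alpha=\frac{10}{\pi}$ and over the whole range $R\le R_2$ (resp. $R\le R_3$) appears out of reach, and the remarks after Theorem~\ref{thm:density} already record the non-maximality obstructions at small $\alpha$; below is the route I would pursue. Starting from $\mathcal{Q}_L^\alpha(y)=\frac{|\nabla_y\theta_{L+y}(\alpha)|^2}{4\theta_{L+y}(\alpha)}$ (obtained from $\nabla_y\sqrt{\theta}=\frac{\nabla_y\theta}{2\sqrt\theta}$), note that every lattice is centrally symmetric, so $y\mapsto\theta_{L+y}(\alpha)$ is even, $\nabla_y\theta_{L+y}|_{y=0}=0$ and $\mathcal{Q}_L^\alpha(0)=0$. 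Since $\mu=\sigma_R$ is radial, I would first reduce the whole family of inequalities $R\le R_2$ to a single \emph{per-radius} statement: writing the spherical average $\overline{\mathcal{Q}_L^\alpha}(r):=\frac{1}{|\partial B_r|}\int_{\partial B_r}\mathcal{Q}_L^\alpha\,dS$, one has $\mathcal{F}_{\sigma_R}^\alpha(L)=\frac{1}{|B_R|}\int_0^R\overline{\mathcal{Q}_L^\alpha}(r)\,|\partial B_r|\,dr$, a positively weighted average of shell averages. Hence it suffices to prove the shell-domination
$$
\overline{\mathcal{Q}_{\mathsf{A}_2}^\alpha}(r)\ge\overline{\mathcal{Q}_L^\alpha}(r)\quad\text{for all }0<r\le R_2,\ \text{with strict inequality on a set of positive measure unless }L=\mathsf{A}_2,
$$
which would immediately give the conjecture for the uniform measure (indeed for every radial $\mu$ supported in $B_{R_2}$). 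This is consistent with the transition flagged in the numerics: since $\mathsf{A}_2$ becomes the \emph{minimizer} for $R\gtrsim0.59$, the shell gap must change sign near $r\approx R_2$, so domination on $(0,R_2]$ is plausible and the threshold $R_2$ (half the side length of $\mathsf{A}_2$, i.e.\ firing fields just touching) is the natural endpoint.

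For small $r$ I would seed the argument by Taylor expansion. Central symmetry gives $\nabla_y\theta_{L+y}=H_L\,y+O(|y|^3)$ with the Hessian $H_L=\nabla_y^2\theta_{L+y}(\alpha)|_{y=0}=-2\pi\alpha\,\theta_L(\alpha)\,I+4\pi^2\alpha^2 S_L$, where $S_L:=\sum_{p\in L}(p\otimes p)e^{-\pi\alpha|p|^2}$, so that $\overline{\mathcal{Q}_L^\alpha}(r)=\frac{r^2}{4d}\,\frac{\Tr(H_L^2)}{\theta_L(\alpha)}+O(r^4)$. The leading factor is expressible through the explicit theta-data $\theta_L(\alpha)$, $\theta_L'(\alpha)=-\pi\Tr(S_L)$ and $\Tr(S_L^2)$. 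Proving that $\mathsf{A}_2$ (resp.\ $\mathsf{D}_3$) maximizes $\Tr(H_L^2)/\theta_L(\alpha)$ at $\alpha=\frac{10}{\pi}$ is itself nontrivial: $S_L$ is a scalar matrix exactly at the strongly eutactic lattices of Definition~\ref{def:eutactic}, and the anisotropy term $\Tr(S_L^2)-\frac1d\Tr(S_L)^2\ge0$ \emph{raises} $\Tr(H_L^2)$, so one must show that the competing gain from the small denominator $\theta_L(\alpha)$ (minimal at $\mathsf{A}_2$ by Montgomery \cite{Mont}) and the negative cross term $-16\pi^3\alpha^3\theta_L\Tr(S_L)$ dominate at $\alpha=\frac{10}{\pi}$. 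The threshold $\alpha_R$ described after Theorem~\ref{thm:density} should appear precisely as the condition governing this competition, so $\alpha=\frac{10}{\pi}$ being (numerically) above it is what makes $\mathsf{A}_2$ win in the small-$r$ regime.

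For the remaining radii $r\in(0,R_2]$ I would combine two tools. First, an exact linearization: from the Legendre identity $\frac{|v|^2}{4\theta}=\sup_{w\in\R^d}\bigl(w\cdot v-|w|^2\theta\bigr)$ applied with $v=\nabla_y\theta_{L+y}$ one gets $\mathcal{F}_{\sigma_R}^\alpha(L)=\sup_{w(\cdot)}\int_{B_R}\bigl(w(y)\cdot\nabla_y\theta_{L+y}(\alpha)-|w(y)|^2\theta_{L+y}(\alpha)\bigr)d\sigma_R(y)$, in which, for each \emph{fixed} test field $w$, the integrand is a genuine linear combination of translated theta functions and their gradients, hence amenable to summation-formula comparisons. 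This gives sharp lower bounds for $\mathcal{F}^\alpha(\mathsf{A}_2)$ and reduces the target inequality, via $\mathcal{F}^\alpha(L)=\int(w_L^*\!\cdot\nabla_y\theta_{L+y}-|w_L^*|^2\theta_{L+y})$ at the optimal field $w_L^*=\frac{\nabla_y\theta_{L+y}}{2\theta_{L+y}}$, to controlling the cross term $\int(w_L^*\!\cdot\nabla_y(\theta_{\mathsf{A}_2+y}-\theta_{L+y})-|w_L^*|^2(\theta_{\mathsf{A}_2+y}-\theta_{L+y}))$. Second, where analysis stalls, a rigorous computer-assisted step: parametrize $\mathcal{L}_2(1)$ by a compact fundamental domain of $SL_2(\Z)$, bound the rapidly convergent theta series and their $y$-derivatives by explicit tail estimates, and certify the strict shell gap off $\mathsf{A}_2$ by interval arithmetic with gradient control; in dimension $3$ the same is done on a compact neighbourhood of $\mathsf{D}_3$ in $\mathcal{L}_3(1)$ (local maximality coming from the second-variation results of \cite{BeterminPetrache,Beterminlocal3d}) together with a global tail bound ruling out degenerate lattices.

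The decisive obstacle, common to both tools, is that the shell comparison ultimately requires knowing the sign of $\theta_{\mathsf{A}_2+y}(\alpha)-\theta_{L+y}(\alpha)$ for shifts $y$ with $0<|y|\le R_2$ that are neither $0$ nor the cell centre $c_L$. The optimization of $L\mapsto\theta_{L+y}(\alpha)$ is presently known only for $y=c_L$ (where $\mathsf{A}_2$ maximizes, \cite{MaxTheta1}) and, via the full critical-point picture, for orthorhombic and triangular $L$ \cite{BeterminPetrache,Baernstein-1997}; the behaviour for a generic shift on the sphere $\{|y|=r\}$ is exactly the missing input. Establishing the shell-domination above — equivalently, the sign of the Legendre cross term at $w_L^*$ — is therefore the crux, and it does not reduce to the currently available theta-function inequalities, which is why the statement remains, for now, a conjecture rather than a theorem.
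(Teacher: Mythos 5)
You have correctly recognized that this statement is a conjecture: the paper contains no proof of it, only supporting evidence, namely the numerics of Section \ref{numerics} (contour plots of $\mathcal{F}_{\sigma_R}^\alpha$ over the fundamental domain $\mathcal{D}_2$ for $R\in\{0.1,\dots,0.5,R_2\}$, the three-dimensional comparisons \eqref{eq:3dcompar}, and a positive-definite Hessian at $\mathsf{D}_3$), the small-$\alpha$ non-maximality of Theorem \ref{thm:alpha0epsilon0}, and Theorem \ref{thm:density}, which guarantees that $\mathsf{A}_2$ and $\mathsf{D}_3$ are at least critical points of $\mathcal{F}_{\sigma_R}^\alpha$ at every scale. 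So there is no ``paper proof'' to match; what you offer instead is an analytic programme, and its ingredients check out. The identity $\mathcal{Q}_L^\alpha(y)=|\nabla_y\theta_{L+y}(\alpha)|^2/(4\theta_{L+y}(\alpha))$ is right (the paper itself oscillates between this and the version without the factor $4$, cf.\ \eqref{tracefinal} versus \eqref{eq:Fmualpha}, which is immaterial for optimization); the Hessian formula $H_L=-2\pi\alpha\,\theta_L(\alpha)I+4\pi^2\alpha^2S_L$ and the shell expansion $\overline{\mathcal{Q}_L^\alpha}(r)=\frac{r^2}{4d}\Tr(H_L^2)/\theta_L(\alpha)+O(r^4)$ are correct; your observation that the anisotropy defect $\Tr(S_L^2)-\frac1d\Tr(S_L)^2\ge 0$ \emph{increases} $\Tr(H_L^2)$, so that eutaxy alone does not settle even the quadratic order and must be played against Montgomery's minimality of $\theta_L(\alpha)$ at $\mathsf{A}_2$ \cite{Mont}, is a genuine and accurate refinement of what the paper records only as the numerical thresholds $\alpha_R$; and the Legendre linearization with optimal field $w_L^*=\nabla_y\theta_{L+y}/(2\theta_{L+y})$ is a valid variational reformulation. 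Your identification of the crux --- the sign of $\theta_{\mathsf{A}_2+y}(\alpha)-\theta_{L+y}(\alpha)$ for generic shifts $0<|y|\le R_2$, known only for $y=c_L$ \cite{MaxTheta1} and through the critical-point pictures for orthorhombic and triangular lattices \cite{BeterminPetrache,Baernstein-1997} --- coincides exactly with the obstruction the paper points to in Section \ref{subsec:disc} and in its concluding remarks.

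One caveat on your reduction step. Shell domination, $\overline{\mathcal{Q}_{\mathsf{A}_2}^\alpha}(r)\ge\overline{\mathcal{Q}_L^\alpha}(r)$ for \emph{all} $r\in(0,R_2]$, is strictly stronger than the conjecture, which only requires the cumulative inequality $\int_0^R\bigl(\overline{\mathcal{Q}_{\mathsf{A}_2}^\alpha}(r)-\overline{\mathcal{Q}_L^\alpha}(r)\bigr)r^{d-1}\,dr\ge 0$ for each $R\le R_2$. The paper's own data make this distinction material: the pointwise gap $g_r$ of \eqref{eq:gr} already changes sign at $r\approx 0.24$, and the cumulative difference against $\Z^2$ vanishes only near $R\approx 0.575$ (Figure \ref{fig:a10Rup}), so the shell-averaged gap must turn negative at some radius strictly below $0.575$ --- conceivably inside $(0,R_2]$ with $R_2\approx 0.537$. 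You should therefore keep the weaker cumulative (monotone-accumulation) formulation as the actual target, reserving shell domination for the range of $r$ where it can be certified; with that adjustment, your programme (quadratic seed for small $r$, Legendre cross-term or interval-arithmetic certification on the fundamental domain for the rest) is a reasonable, and more concrete, road map than anything the paper provides toward turning this conjecture into a theorem.
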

This problem by itself turns out to be a very interesting mathematical question with the need to derive new results for translated lattice theta functions. For instance, one may want to push further the properties of the Heat Kernel and to use the connection with $\mathcal{F}_\mu^\alpha$ explained in Remark \ref{rmk:heat}. 

\medskip

Moreover, we have also compared our model to the results found by Brun et al. in \cite{ProgressiveScale} about the size of the firing field $\Sigma$ related to the lattice spacing. It appears that the optimality properties of our model support the experimental observations of Brun et al. for a wide range of parameters $\alpha>1.25$. Also notice that in their recent work \cite{Nageleetal}, Nagele et al. suggest that the ratio between the diameter of $\Sigma$ (i.e. $2R$ if $\Sigma=B_R$) and the lattice spacing should be slightly smaller than the one found in \cite{ProgressiveScale}. Nevertheless, it seems that one could find again a range of $\alpha$ such that the triangular lattice again maximizes the Fisher Information's trace.

\medskip

For a future work, it would also be interesting to study the $\mu$-dependence of the maximizer of $\mathcal{F}_\mu^\alpha$, in particular when $\mu$ is not uniform nor radially symmetric. In the latter case, one could try to explain the existence of sheared grids observed for instance in \cite{Sheargridcells}, also for grid cells when for example the animal visits the boundary of its displacement zone. Another direction or research would also be to replace the Gaussian function by another tuning curve and to study the same type of optimality problem. Finally, we think that our results and observation might be useful for other biological or physical models where periodic Fisher Information with Poisson distribution are involved.

\medskip

\textbf{Plan of the paper.} In Section \ref{sec:Fisher}, we explain how we obtain formula \eqref{eq:Fmualpha} and on which mathematical assumptions we have built our model. Furthermore, the scaling formula \eqref{eq:scalingintro} is shown in Proposition \ref{prop:scale}. Section \ref{sec:alternative} is devoted to the proof of Theorem \ref{thm:alpha0epsilon0} and to the alternative formula \ref{eq:altF}. The characterization of volume-stationary lattices is done in Section \ref{sec:volstat} where the proof of Theorem \ref{thm:density} is given. Finally, our numerical investigations are presented in Section \ref{numerics} where we first discuss the parametrization of lattices as well as the properties of $\mathcal{Q}_L^\alpha(y)$.

\section{Description of the model, derivation of formula \eqref{eq:Fmualpha} and scaling}\label{sec:Fisher}

In this section, we are justifying formula \eqref{eq:Fmualpha} by listing all our assumptions. As explained in the introduction, we are following Mathis et al. \cite{Mathisetal} for building our model, with only small modifications.

\medskip

We consider neurons (called grid cells) that are firing (spiking) according to the position (the stimulus) $x\in \R^d$ of an animal. We are also considering an arbitrary fixed time interval $[0,T]$ where the neurons are spiking. We are assuming the following hypothesis \textbf{(A1)-(A6)} where the Gaussian is replaced for the moment by an arbitrary function $f$ as in \eqref{eq:Ef}.
\begin{itemize}
\item[\textbf{(A1)}] \textbf{Independence of neurons.} The neurons we consider are independently firing.
\end{itemize}
Given $n$ independent neurons such that the $i^{th}$ neuron fires $k_i$ times in the time interval $[0,T]$, the probability of having $K=(k_1,...,k_n)$ spikes when an animal is at position $x\in \R^d$ is 
\begin{equation}\label{eq:probaP}
P(K| x)=\prod_{i=1}^n P_i(k_i |x),
\end{equation}
where $P_i(k_i|x)$ is the probability of firing the $i^{th}$ neuron $k_i$ times when the animal is at $x$.

\begin{itemize}
\item[\textbf{(A2)}] \textbf{Firing Poisson statistics.} The neuron's firing follows a Poisson distribution. More precisely, we assume that
\begin{equation}
P_i(k_i|x)=\frac{(\Omega_i(x))^{k_i} e^{-\Omega_i(x)}}{k_i !},
\end{equation}
where the tuning curve $\Omega_i(x)$ is the average firing number of the $i^{th}$ neuron at position $x$ in the time interval $[0,T]$. 
\end{itemize}

\begin{itemize}
\item[\textbf{(A3)}] \textbf{Spread out lattice-periodic tuning curve shape.} All the tuning curves of the neurons are identically equal to 
\begin{equation}
\Omega_i(x)=\Omega_L(x)=E_f[L+x]:=\sum_{p\in L} f(|p+x|^2),\quad \forall i\in \{1,..., n\},
\end{equation}
for a lattice $L\in \mathcal{L}_d(1)$ and a function $f:[0,\infty)\to \R$ as in \eqref{eq:Ef}, i.e.
$$
 |f(r)|=O(r^{-\frac{d}{2}-\eta}), \eta>0\quad \textnormal{as $r\to +\infty$},
$$
in order for $\Omega_L(x)$ to be absolutely convergent. We will call $\Omega_L:\R^d\to \R_+$ the grid cell's tuning curve. When $f$ has its maximum at $r=0$, it means that there is more spikes when the animal cross a lattice site, which is the usual assumptions made on $f$. This tuning curve's shape is slightly different than the periodic boundary conditions assumed by Mathis et al. in \cite{Mathisetal}.
\end{itemize}

We now define a grid module as an ensemble of $N\in \N$ shifted grid cells $\{ \Omega_{L+y_j}\}_{j=1}^N$ where the vectors $Y=(y_j)_j\subset \R^d$ are called the spatial phases of the module. Furthermore, we have for all $x\in \R^d$, all lattice $L\in \mathcal{L}_d(1)$ and all shifting vector $y_j\in Y$,
\begin{equation}
\Omega_{L+y_j}(x)=E_f[L+y_j+x]:=\sum_{p\in L} f(|p+x+y_j|^2).
\end{equation}

Therefore, for any spatial phase with shifting vector $y_j$, we write the probability in \eqref{eq:probaP} as
\begin{equation}
P_j(K |x)=e^{-n \Omega_{L+y_j}(x)}\prod_{i=1}^n \frac{(\Omega_{L+y_j}(x))^{k_i}}{k_i !},\quad j\in \{1,...,N\}.
\end{equation}

As explained in \cite{Mathisetal}, we are interested in solving the following mathematical question: given a spike count vector $K=(k_1,...,k_n)$, where is the animal? The estimation of this position is written $\widehat{x}(K)$. We therefore want to minimize the square of the error in the decoding process which is given by
\begin{equation}
\varepsilon(x | \widehat{x}):= \mathbb{E}_{P(K |x)} \left(\| x- \widehat{x}(K)\|^2  \right).
\end{equation}

Minimizing the error $\varepsilon(x | \widehat{x})$ is the same as minimizing the trace of the covariance matrix $\textnormal{Cov}(x,\widehat{x})$. Furthermore, the Fisher Information (see e.g. \cite[Eq. (11.291)]{CoverThpas}) associated to the probability $P$ with Poisson distribution is defined by the matrix $J(x)=(J_{\ell,m}(x))_{\ell,m}$ where
\begin{equation}
J_{\ell,m}(x):=\int \left[\partial_{x_\ell} \log P(K |x)  \partial_{x_m} \log P(K |x)\right] P(K|x)dK.
\end{equation}
\begin{itemize}
\item[\textbf{(A4)}] \textbf{Unbiased estimator assumption.} We assume that the Cramer-Rao lower bound holds (see e.g. \cite[Thm. 11.10.1]{CoverThpas}), i.e. $\textnormal{Cov}(x,\widehat{x})\geq J(x)^{-1}$. This is actually automatically satisfied in the independent Poisson process case.
\end{itemize}

For the grid module, (A1) ensures that the Fisher Information is the sum of Fisher Information for all the $N$ spatially shifted phases $Y$, i.e.

\begin{equation}
J_{\ell,m}(x)=\sum_{j=1}^N \int \left[\partial_{x_\ell} \log P_j(K |x)  \partial_{x_m} \log P_j(K |x)\right] P_j(K|x)dK.
\end{equation}

A straightforward computation gives
\begin{equation}
\mathrm{Tr}J(x)= \sum_{\ell=1}^d \sum_{j=1}^N \left(\partial_{x_\ell} \Omega_{L+y_j}(x)\right)^2  \int \left(-n + \frac{ \sum_{i=1}^n k_i}{\displaystyle \Omega_{L+y_j}(x)}  \right)^2 P_j(K | x)dK.
\end{equation}
The integral is in fact the Fisher Information of $n$ independent Poisson processes with the same parameter $\lambda= \Omega_{L+y_j}(x)$, which means that it is $n$ times the Fisher Information of one single Poisson process with the same parameter (see e.g. \cite[p. 395]{CoverThpas}). Therefore, for all $y_j\in Y$,
\begin{equation}
\int \left(-n + \frac{ \sum_{i=1}^n k_i}{\displaystyle \Omega_{L+y_j}(x)}  \right)^2 P_j(K | x)dK=\frac{n}{\Omega_{L+y_j}(x)}.
\end{equation}
We therefore obtain that
\begin{equation}\label{tracefinal}
\mathrm{Tr}J(x)= 
n\sum_{j=1}^N \mathcal{Q}_L(y_j+x),\quad \textnormal{where}\quad \mathcal{Q}_{L}(y):= \frac{\left|\nabla_x E_f[L+y]\right|^2}{E_f[L+y]}.
\end{equation}

Optimizing such lattice energy is a huge challenge since the maximizer varies a lot with $x$ and $Y$ (see Section \ref{subsec:disc}). That is why it is not appropriate to our study. Therefore, we are again following \cite{Mathisetal} in order to transform the discrete set of shifts $Y$ into a continuous one written $\Sigma$.

\begin{itemize}
\item[\textbf{(A5)}] \textbf{Aggregation of spatial phases.} We assume that $\displaystyle \frac{1}{N}\sum_{j=1}^N \delta_{y_j}\rightharpoonup\mu \in \mathcal{M}(\Sigma)$ as $N\to \infty$ in the weak-$\ast$ sense, with support in a compact set $\Sigma\subset \R^2$. This is suggested by experiments (see \cite{Haftingetal}). The set $\Sigma$ can be considered as the support of the grid cells module, also called the firing field of the module. Actually, there is a scale dependence between $\Sigma$ and $L$ in the sense that, if the distances in the lattice are multiplied by a real number $\lambda$, then it is the same for $\Sigma$ (see e.g. \cite{Quirk,MECdiscrete} and Proposition \ref{prop:scale}). Furthermore, it has been observed in \cite{ProgressiveScale} that the smallest width of $\Sigma$ is 30cm-50cm for a lattice spacing (corresponding to the triangular lattice) of 50cm-1m (see also \cite{Nageleetal} for other data with smaller firing fields width compared to lattice spacing), i.e. firing fields are not overlapping (as in Figure \ref{fig:tri}). We also notice that, presented as above, $\mu$ should be a probability measure. However, in the following we are considering a more general Radon measure.
\end{itemize}
Therefore, we want to maximize the following Fisher Information per neuron for the module $M=(L,\Omega_L^\alpha,\mu,\Sigma)$, where the limit is justified by the fact that $y\mapsto \mathcal{Q}_L(y)$ is continuous and bounded on the compact set $\Sigma$,
\begin{equation}
J_M(x):=\frac{1}{n}\lim_{N\to \infty}\frac{\mathrm{Tr}J(x)}{  N}=\int_{\Sigma} \frac{\left| \nabla_x \Omega_{L+y}(x)\right|^2}{\Omega_{L+y}(x)}d\mu(y)=\int_\Sigma \frac{\left|\nabla_x E_f[L+y+x] \right|^2}{E_f[L+y+x]}d\mu(y).
\end{equation}

\begin{itemize}
\item[\textbf{(A6)}] \textbf{Re-centering of the problem.} We assume that $x=0$. Thus, the problem is simplified to maximize the resolution at $x=0$ of the decoding process.
\end{itemize}
Therefore, given $f$ satisfying assumptions \eqref{eq:Ef} and $\mu\in \mathcal{M}(\Sigma)$, we want to maximize among lattices $L$ the following functional:
\begin{equation}
\mathcal{F}_{\mu}(L)=J_M(0)=\int_\Sigma \frac{\left|\nabla_y E_f[L+y] \right|^2}{E_f[L+y]}d\mu(y).
\end{equation}

Furthermore, if $f\geq 0$, then we have the following more compact form 
\begin{equation}\label{eq:formulaFdetailed}
\mathcal{F}_{\mu}(L)=\int_\Sigma \left| \nabla_y \sqrt{E_f[L+y]}\right|^2 d\mu(y)
\end{equation}
which gives us \eqref{eq:Fmualpha} by choosing $f(r)=e^{-\pi \alpha r}$ and renaming the functional $\mathcal{F}_\mu^\alpha$ to show the $\alpha$-dependence. 

\medskip

For the reader's convenience, the elementary form of $\mathcal{F}_\mu^\alpha(L)$ is
$$
\mathcal{F}_\mu^\alpha(L)=4\pi^2 \alpha^2 \displaystyle \bigints_\Sigma \frac{\displaystyle \sum_{i=1}^d \left(\sum_{p\in L} (p_i+y_i) e^{-\pi \alpha |p+y|^2}  \right)^2}{\displaystyle \sum_{p\in L} e^{-\pi \alpha |p+y|^2}}d\mu(y).
$$

\begin{remark}[Fisher Information in terms of the Heat Kernel]\label{rmk:heat}
It has to be noticed that $\mathcal{F}_\mu^\alpha$ can be expressed in terms of the Heat Kernel on $L$. More precisely, and as we already pointed out in \cite{Baernstein-1997,BeterminKnuepfer-preprint},  let $u_L(y,t)$ be the temperature at point $y$
  and at time $t$, if at initial time $t=0$ a heat source of unit strength is placed at
  each point of $L$, i.e. $u_L$ is defined, for any lattice $L\subset \R^d$, any $y\in \R^d$  and any $t>0$ as the solution of
\begin{align*}
  \left \{ %
  \begin{array}{ll}
    \partial_t u_L(y,t) = \Delta_y u_L \quad\quad\quad\quad &\text{for  $(y,t) \in \R^d \times (0,\infty)$} \vspace{0.3ex} \\
    u_L(y,0) = \sum_{p \in L} \delta_{p} \quad\quad\quad\quad &\text{for $y \in \R^d$},
  \end{array}
  \right.
\end{align*}
where $\delta_p$ is the Dirac measure at $p \in \R^d$. Then, for any $\alpha>0$, any lattice $L\subset \R^d$ and any measure $\mu\in \mathcal{M}(\Sigma)$,
\begin{equation}
\mathcal{F}_\mu^\alpha(L)=\frac{1}{\alpha^{\frac{d}{2}}}\int_\Sigma \left|\nabla_y \sqrt{u_L\left(y,\frac{1}{4\pi \alpha}  \right)}\right|^2d\mu(y).
\end{equation}
\end{remark}

We now show the scaling formula already known by Mathis et al. in \cite[Eq (22)]{Mathisetal} and applied to the Gaussian periodic tuning curve. It is crucial to remember that, as recalled above and according to \cite{Quirk,MECdiscrete}, changing the density of the lattice $L$ also changes the size of the firing field $\Sigma$ accordingly.

\begin{prop}[\textbf{Scaling formula}]\label{prop:scale}
For any $\lambda>0$, any $\mu\in \mathcal{M}(\Sigma)$, any $L\in \mathcal{L}_d(1)$ and any $\alpha>0$, we have
\begin{equation}
\mathcal{F}^\alpha_{\mu_\lambda}(\lambda L)=\lambda^{-2}\mathcal{F}^{\lambda^2 \alpha} _{\mu}(L).
\end{equation}
\end{prop}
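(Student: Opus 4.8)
The plan is to compute both sides directly from the definition of $\mathcal{F}_\mu^\alpha$ in \eqref{eq:Fmualpha}, tracking carefully how the three ingredients — the lattice, the Gaussian parameter, and the measure — transform under scaling. The key elementary fact I would use first is the behaviour of the translated theta function under dilation: since $\lambda L = \{\lambda p : p \in L\}$, a point $z \in \R^d$ decomposes as $z = \lambda y$ and one has $\theta_{\lambda L + \lambda y}(\alpha) = \sum_{p \in L} e^{-\pi\alpha\lambda^2|p+y|^2} = \theta_{L+y}(\lambda^2\alpha)$. Thus scaling the lattice by $\lambda$ while evaluating at the scaled point $\lambda y$ is identical to rescaling the Gaussian parameter from $\alpha$ to $\lambda^2\alpha$ at the unscaled point $y$. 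This single identity is the engine of the whole computation.

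Next I would handle the gradient. Writing $\mathcal{Q}_{\lambda L}^\alpha(z) = |\nabla_z \sqrt{\theta_{\lambda L + z}(\alpha)}|^2$ and substituting $z = \lambda y$, the chain rule gives $\nabla_z = \lambda^{-1}\nabla_y$, so that
$$
\mathcal{Q}_{\lambda L}^\alpha(\lambda y) = \left|\lambda^{-1}\nabla_y \sqrt{\theta_{L+y}(\lambda^2\alpha)}\right|^2 = \lambda^{-2}\,\mathcal{Q}_L^{\lambda^2\alpha}(y),
$$
using the theta identity from the previous step inside the square root. This already produces the factor $\lambda^{-2}$ and the shift $\alpha \mapsto \lambda^2\alpha$ appearing in the claimed formula.

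Finally I would assemble the integral over $\Sigma$, which is where the rescaled measure $\mu_\lambda$ enters. By definition $\mathcal{F}_{\mu_\lambda}^\alpha(\lambda L) = \int_{\Sigma} \mathcal{Q}_{\lambda L}^\alpha(z)\,d\mu_\lambda(z)$, and I would perform the change of variables $z = \lambda y$. The prescription $\mu_\lambda(F) = \mu(\lambda F)$ means precisely that pushing forward by the dilation $y \mapsto \lambda y$ sends $\mu$ to $\mu_\lambda$, so that $\int g(z)\,d\mu_\lambda(z) = \int g(\lambda y)\,d\mu(y)$ for test functions $g$; the support $\Sigma$ of $\mu_\lambda$ corresponds to the support of $\mu$ under this change of variable, matching the scaling of the firing field. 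Combining this with the pointwise identity $\mathcal{Q}_{\lambda L}^\alpha(\lambda y) = \lambda^{-2}\mathcal{Q}_L^{\lambda^2\alpha}(y)$ yields
$$
\mathcal{F}_{\mu_\lambda}^\alpha(\lambda L) = \int \lambda^{-2}\mathcal{Q}_L^{\lambda^2\alpha}(y)\,d\mu(y) = \lambda^{-2}\mathcal{F}_\mu^{\lambda^2\alpha}(L),
$$
as desired. I do not anticipate a serious obstacle here; the only point requiring genuine care is to state the measure-transformation convention consistently, i.e. to verify that the definition $\mu_\lambda(F) = \mu(\lambda F)$ gives exactly the substitution rule $\int g\,d\mu_\lambda = \int g(\lambda\,\cdot)\,d\mu$ and that the support $\Sigma$ is handled compatibly, so that no stray Jacobian factor appears. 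Because $\mu$ is a general Radon measure rather than a density, the change of variables is purely a push-forward statement and no factor of $\lambda^d$ arises, which is the subtle bookkeeping detail to get right.
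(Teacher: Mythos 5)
Your proof is correct and is essentially the paper's own argument: the dilation identity $\theta_{\lambda L+\lambda y}(\alpha)=\theta_{L+y}(\lambda^{2}\alpha)$, the chain-rule factor $\lambda^{-2}$ from the gradient, and the substitution $\int g\,d\mu_\lambda=\int g(\lambda\,\cdot)\,d\mu$, all of which the paper compresses into a single ``simple change of variable'' display. One remark: that substitution rule is the push-forward of $\mu$ under $y\mapsto\lambda y$ (support $\lambda\Sigma$, no Jacobian factor), which is what the paper's proof actually uses, even though the stated definition $\mu_\lambda(F)=\mu(\lambda F)$ literally corresponds to the inverse dilation --- so the bookkeeping detail you flagged is handled in your proposal exactly as (and no worse than) in the paper itself.
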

\begin{proof}
By a simple change of variable, we have
\begin{align*}
\mathcal{F}^\alpha _{\mu_\lambda}(\lambda L)&=\int_{\lambda \Sigma} \left| \nabla_y \sqrt{\theta_{\lambda L+y}(\alpha)}\right|^2 d\mu_\lambda(y)\\
&=\frac{1}{\lambda^2}\int_\Sigma \left| \nabla_y \sqrt{\theta_{\lambda L+\lambda y}(\alpha)}\right|^2 d\mu(y)\\
&=\frac{1}{\lambda^2}\int_\Sigma  \left| \nabla_y \sqrt{\theta_{ L+y}(\lambda^2\alpha)}\right|^2d\mu(y)=\lambda^{-2}\mathcal{F}^{\lambda^2 \alpha} _{\mu}(L).
\end{align*}
\end{proof}

Since for any $\alpha>0$, any $\mu\in \mathcal{M}(\Sigma)$ and any $L\in \mathcal{L}_d(1)$, $\lim_{\lambda\to 0}\mathcal{F}^\alpha _{\mu_\lambda}(\lambda L)= \lim_{\lambda \to 0} \lambda^{-2}\mathcal{F}^{\lambda^2 \alpha} _{\mu}(L)=+\infty$, it is sufficient to look for the maximizer of $\mathcal{F}_\mu^\alpha$ for fixed $\alpha$, $\mu\in \mathcal{M}(\Sigma)$ and  $L\in \mathcal{L}_d(1)$. If the density is not fixed, then it is then sufficient to take the high density limit to find $+\infty$ as the maximum of $\mathcal{F}_\mu^\alpha$. Notice that we also have, for any $\alpha>0$, any $\mu\in \mathcal{M}(\Sigma)$ and $L\in \mathcal{L}_d(1)$, $\lim_{\lambda\to \infty}\mathcal{F}^\alpha _{\mu_\lambda}(\lambda L)= \lim_{\lambda \to \infty} \lambda^{-2}\mathcal{F}^{\lambda^2 \alpha} _{\mu}(L)=0$.



%

\section{Alternative formula and proof of Theorem \ref{thm:alpha0epsilon0}}\label{sec:alternative}
We are proving Theorem \ref{thm:alpha0epsilon0} in this part, using an alternative formula given in \cite{Regev:2015kq}.

\begin{prop}[\textbf{Alternative formula for $\mathcal{F}_\mu^\alpha$}]
For any $\alpha>0$, any $\mu\in \mathcal{M}(\Sigma)$ and any $L\in \mathcal{L}_d(1)$, 
\begin{equation}\label{eq:altF}
\mathcal{F}^\alpha_{\mu}(L)=4\pi^2 \alpha^2 \int_\Sigma \theta_{L+y}(\alpha) \left| \underset{w \sim D_{L+y,\alpha}}{\mathbb{E}[w]}\right|^2 d\mu(y),
\end{equation}
where $D_{L+x,\alpha}$ is the discrete Gaussian distribution over $L+x$ with parameter $\alpha$, i.e. the probability distribution over $L+x$ that assigns probability proportional to $e^{-\pi \alpha |w|^2}$ to each vector $w\in L+x$.
\end{prop}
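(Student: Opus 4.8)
The plan is to identify the vector $\sum_{p\in L}(p+y)e^{-\pi\alpha|p+y|^2}$ appearing in the elementary form of $\mathcal{F}_\mu^\alpha$ with the (unnormalized) mean of the discrete Gaussian distribution $D_{L+y,\alpha}$. First I would differentiate the translated theta function term by term: since $\partial_{y_i}e^{-\pi\alpha|p+y|^2}=-2\pi\alpha(p_i+y_i)e^{-\pi\alpha|p+y|^2}$, one obtains
$$
\nabla_y\theta_{L+y}(\alpha)=-2\pi\alpha\sum_{p\in L}(p+y)e^{-\pi\alpha|p+y|^2}.
$$
The only analytic point is the interchange of $\nabla_y$ with the infinite sum, which is justified because the differentiated series converges locally uniformly on $\R^d$ thanks to the Gaussian decay of its summands; this is routine and is the mildest possible obstacle.

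Next I would unfold the definition of the discrete Gaussian distribution. Writing $w=p+y$, so that the points of $L+y$ are exactly the $p+y$ with $p\in L$, the distribution $D_{L+y,\alpha}$ assigns to $w\in L+y$ the probability $e^{-\pi\alpha|w|^2}/\theta_{L+y}(\alpha)$, whence
$$
\mathbb{E}_{w\sim D_{L+y,\alpha}}[w]=\frac{1}{\theta_{L+y}(\alpha)}\sum_{w\in L+y}w\,e^{-\pi\alpha|w|^2}=\frac{1}{\theta_{L+y}(\alpha)}\sum_{p\in L}(p+y)e^{-\pi\alpha|p+y|^2}.
$$
Comparing the two displays gives the clean identity $\nabla_y\theta_{L+y}(\alpha)=-2\pi\alpha\,\theta_{L+y}(\alpha)\,\mathbb{E}_{w\sim D_{L+y,\alpha}}[w]$, i.e. the gradient of the theta function, divided by the theta function itself, is (up to the factor $-2\pi\alpha$) precisely the mean of the discrete Gaussian.

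Finally I would take squared Euclidean norms and substitute into the elementary form of $\mathcal{F}_\mu^\alpha$ recalled just above the statement. The numerator $\sum_{i=1}^d\left(\sum_{p\in L}(p_i+y_i)e^{-\pi\alpha|p+y|^2}\right)^2$ equals $\theta_{L+y}(\alpha)^2\,\left|\mathbb{E}_{w\sim D_{L+y,\alpha}}[w]\right|^2$, so dividing by $\theta_{L+y}(\alpha)$ and integrating against $\mu$ over $\Sigma$ yields
$$
\mathcal{F}_\mu^\alpha(L)=4\pi^2\alpha^2\int_\Sigma\theta_{L+y}(\alpha)\left|\mathbb{E}_{w\sim D_{L+y,\alpha}}[w]\right|^2 d\mu(y),
$$
which is exactly \eqref{eq:altF}. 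There is no substantial difficulty here: the proposition is essentially a probabilistic reinterpretation of the normalized gradient of the translated theta function, and the whole argument reduces to the term-by-term differentiation above together with the definition of $D_{L+y,\alpha}$.
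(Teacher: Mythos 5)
Your proof is correct and follows essentially the same route as the paper's: both rest on the identity $\nabla_y\theta_{L+y}(\alpha)=-2\pi\alpha\,\theta_{L+y}(\alpha)\,\mathbb{E}_{w\sim D_{L+y,\alpha}}[w]$ obtained by term-by-term differentiation of the translated theta function, followed by squaring, dividing by $\theta_{L+y}(\alpha)$, and integrating against $\mu$. The only difference is cosmetic: the paper quotes this identity from Regev and Stephens-Davidowitz, whereas you verify it directly and justify the interchange of gradient and sum, which makes the argument slightly more self-contained.
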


\begin{remark}
Let us specify that the above expected value is actually the following vector:
\begin{equation}\label{eq:Edetailed}
\underset{w \sim D_{L+y,\alpha}}{\mathbb{E}[w]}=\frac{1}{\theta_{L+y}(\alpha)}\sum_{p\in L} (p+y) e^{-\pi \alpha |p+y|^2}.
\end{equation}
\end{remark}

\begin{proof}
Following \cite[p. 6]{Regev:2015kq} (with $s=\alpha^{-1/2}$), a straightforward computation gives (see \eqref{eq:Edetailed})
\begin{equation}
\frac{\nabla_x \theta_{L+x}(\alpha)}{\theta_{L+x}(\alpha)}=-2\pi \alpha \underset{w \sim D_{L+x,\alpha}}{\mathbb{E}[w]}.
\end{equation}
 It follows that
\begin{equation}
\frac{|\nabla_y \theta_{L+y}(\alpha)|^2}{\theta_{L+y}(\alpha)}=4\pi^2 \alpha^2 \theta_{L+y}(\alpha) \left| \underset{w \sim D_{L+y,\alpha}}{\mathbb{E}[w]}\right|^2,
\end{equation}
which shows \eqref{eq:altF}.
\end{proof}

%
We now show Theorem \ref{thm:alpha0epsilon0} by directly applying \eqref{eq:altF}.

\begin{proof}[Proof of Theorem \ref{thm:alpha0epsilon0}]
A straightforward computation shows that, for any $\mu\in \mathcal{M}(\Sigma)$ and any $\lambda>0$, as $\alpha\to 0$,
\begin{equation}
\mathcal{F}^\alpha_{\mu_\lambda}(L)=4\pi^2 \alpha^2 \int_{\Sigma} \theta_{L+y}(\alpha) \left| \underset{w \sim D_{L+y,\alpha}}{\mathbb{E}[w]}\right|^2 d\mu_\lambda(y)\sim 4\pi^2 \alpha^2 \int_{\Sigma} \theta_{L+y}(\alpha)  d\mu_\lambda(y),
\end{equation}
which is uniquely minimized by $\mathsf{A}_2$ (resp. locally minimized by $\mathsf{D}_3^*$) by a direct application of \cite[Thm. 2 and 3]{BetKnupfdiffuse} (resp. \cite[Thm. 4.3]{BetSoftTheta}) as $\lambda$ is small enough or if $\mu$ has the form $d\mu(x)=\rho(|x|^2)dx$ and $\rho$ is a completely monotone function. The fact that $\mathcal{F}^\alpha_{\mu_\lambda}$ does not have any maximizer as $\alpha,\lambda$ are small enough is again a simple consequence of our work in \cite{BetSoftTheta} and the fact that the lattice theta function $L\mapsto \theta_L(\alpha)$ does not have a maximizer in $\mathcal{L}_d(1)$. Indeed, it is enough to take a sequence of orthorhombic lattices with degenerate shapes for showing that the theta functions goes to infinity.
\end{proof}
\begin{remark}
The same is true in dimension $d\in \{8,24\}$ replacing $\mathsf{A}_2$ by $\mathsf{E}_8$ or $\Lambda_{24}$, again by an application of \cite[Thm 4.5]{BetSoftTheta} and the universal optimality of these lattices shown in \cite{CKMRV2Theta}.
\end{remark}

\section{Volume-stationary lattices - Proof of Theorem \ref{thm:density}}\label{sec:volstat}
We now show Theorem \ref{thm:density} by using a combination of Proposition \ref{prop:scale}, formula \eqref{eq:altF} and our works in \cite{BetKnupfdiffuse,LBMorse}.
\begin{proof}[Proof of Theorem \ref{thm:density}]
 We consider $\mathcal{F}^\alpha_{\mu_\lambda}(\lambda L)$ for $\lambda>0$, then by assumption we have that $\nabla_L \mathcal{F}^\alpha_{\mu_\lambda}(\lambda L)=0$ for all $\lambda\in I$, where $I$ is an open interval of $\R_+$. We therefore have, for all $\lambda\in I$,
\begin{equation}
\nabla_L \int_{\Sigma} \left| \nabla_y \sqrt{\theta_{\lambda L+y}(\alpha)}\right|^2 d\mu(y)=0,
\end{equation}
the gradient $\nabla_L$ being taken in $\mathcal{L}_d(1)$. Since $\lambda\mapsto \nabla_L \mathcal{F}^\alpha_{\mu}(\lambda L)$ is analytic by integration on a compact set and absolute convergence, its zero are therefore isolated and it follows that necessarily $I=(0,\infty)$. We now use formula \eqref{eq:altF} combined with Proposition \ref{prop:scale} and we get
\begin{align*}
\mathcal{F}_{\mu_\lambda}^\alpha(\lambda L)=\lambda^{-2} \mathcal{F}_\mu^{\lambda^2 \alpha}(L)= 4\pi^2 \lambda^2 \alpha^2 \int_\Sigma \theta_{L+y}(\lambda^2 \alpha)  \left| \underset{w \sim D_{L+y,\lambda^2\alpha}}{\mathbb{E}[w]}\right|^2 d\mu(y).
\end{align*}
Therefore, since the above squared expectation converges to a constant for small $\lambda^2 \alpha$, we obtain for $\lambda<\lambda_0$ sufficiently small,
\begin{equation}
 \nabla_L \left\{\int_\Sigma \theta_{L+y}(\lambda^2 \alpha) d\mu(y)\right\}=0.
\end{equation}
By Poisson Summation Formula (see e.g. \cite[Eq. (43)]{ConSloanPacking}), we obtain as in \cite{BetKnupfdiffuse,BetSoftTheta} that, for all $\lambda,\alpha >0$,
\begin{align*}
\int_\Sigma \theta_{L+y}(\lambda^2 \alpha) d\mu(y)=\int_\Sigma \frac{1}{(\lambda^2 \alpha)^{\frac{d}{2}}}\sum_{q\in L^*} e^{-\frac{\pi |q|^2}{\lambda^2 \alpha}} e^{2i\pi q\cdot y}d\mu(y)&=\frac{1}{(\lambda^2 \alpha)^{\frac{d}{2}}}\sum_{q\in L^*} e^{-\frac{\pi |q|^2}{\lambda^2 \alpha}} \int_\Sigma e^{2i\pi q\cdot y}d\mu(y)\\
&=\frac{1}{(\lambda^2 \alpha)^{\frac{d}{2}}}\sum_{q\in L^*} e^{-\frac{\pi |q|^2}{\lambda^2 \alpha}} \hat{\mu}(q),
\end{align*}
where $\hat{\mu}$ is the Fourier transform of the measure $\mu$. By analyticity of $\lambda\mapsto \nabla_L \int_\Sigma \theta_{L+y}(\lambda^2 \alpha) d\mu(y)$, we therefore get again that, for all $\lambda>0$,
\begin{equation}\label{eq:critic}
\nabla_L \sum_{q\in L^*} e^{-\frac{\pi}{\alpha \lambda^2}|q|^2}\widehat{\mu}(q)=0.
\end{equation}

Since $\mu$ is radially symmetric, then its Fourier transform is radially symmetric, i.e. $\widehat{\mu}(q)=h(|q|)$ for some function $h:\R_+\to \R$. Now, the same argument can be used as in \cite{DeloneRysh,Gruber,LBMorse} to conclude that the first layer of $L^*$ must be strongly eutactic. Indeed, \eqref{eq:critic} implies that, for all $t>0$ (which could be written $t=\lambda^2 \alpha$),
$$
\nabla_{L^*} \theta_{L^*}\left( \frac{1}{t}\right)=0,
$$
which implies that
$$
\nabla_{L^*} \zeta_{L^*}(s)=0
$$
for all $s>d$ for the Epstein zeta function defined by
$$
\zeta_{L^*}(s):=\sum_{q\in L^*\backslash \{0\}} \frac{1}{|q|^s}=\int_0^\infty \left(\theta_{L^*}\left( \frac{t}{\pi}\right)-1\right)\frac{t^{\frac{s}{2}-1}}{\Gamma(s/2)}dt,
$$
where the second equality follows from a simple Laplace transform argument (see e.g. \cite{BetTheta15}). We therefore apply \cite[Corollary 1]{Gruber} to conclude that all the layers of $L^*$ are necessarily strongly eutactic. In dimension $d\in \{2,3\}$, all these lattices are characterized (see for instance \cite[Corollary 2]{Gruber}) and they are the one listed in the theorem's statement for which the list of duals stays unchanged. This completes the proof.
\end{proof}
\begin{remark}
The proof still holds if the Gaussian is replaced by a function $f$ which is the Laplace transform of a measure. We also suspect that the result is also true for some non-radially symmetric measure $\mu$.
\end{remark}

\section{Numerical investigation}\label{numerics}
We are using the software Scilab to perform our numerical investigation. All the values are computed by using a sufficient number of points for estimating our infinite sums according to the value of $\alpha$ we choose (mostly $\alpha=\frac{10}{\pi}\approx 3.18$).

\subsection{Parametrization of lattices in dimension 2 and 3}

We recall here how to parametrize a $d$-dimensional lattice in order to perform our numerical computations. We basically follow the lines of \cite{Beterloc,Beterminlocal3d} by stating that a two-dimensional (resp. three-dimensional) lattice of unit density can be parametrized by two real numbers $(x,y)$ (resp. five real numbers $(u,v,x,y,z)$) belonging to a fundamental domain $\mathcal{D}_2\subset \R^2$ (resp. $\mathcal{D}_3\subset \R^5$) where only one copy of each lattice appears (see e.g. \cite[Sect. 1.4]{Terras_1988}). More precisely, two-dimensional and three-dimensional lattices of unit density can be written respectively as
\begin{align*}
&L=\Z\left(\frac{1}{\sqrt{y}},0  \right)\oplus \Z \left( \frac{x}{\sqrt{y}}, \sqrt{y} \right)\in \mathcal{L}_2(1)\\
&L=2^{\frac{1}{6}}\left[ \Z\left( \frac{1}{\sqrt{u}},0,0 \right)\oplus \Z\left( \frac{x}{\sqrt{u}},\frac{v}{\sqrt{u}},0 \right)\oplus \Z \left(\frac{y}{\sqrt{u}},\frac{v z}{\sqrt{u}},\frac{u}{v\sqrt{2}}  \right) \right]\in \mathcal{L}_3(1).
\end{align*}
In dimension $d=2$, this fundamental domain $\mathcal{D}_2$ is easy to describe whereas it is more complicated in dimension $d=3$. In particular, we have
$$
\mathcal{D}_2:=\left\{ (x,y)\in \R^d : 0\leq x\leq \frac{1}{2}, y>0, x^2+y^2\geq 1 \right\}.
$$
The square lattice $\Z^2$ and triangular lattice $\mathsf{A}_2$ are respectively represented by the points $(0,1)\in \mathcal{D}_2$ and $\left(\frac{1}{2},\frac{\sqrt{3}}{2}  \right)\in \mathcal{D}_2$. We omit to write what an example of fundamental domain of $\mathcal{L}_3(1)$ could be, but we recall that the simple cubic lattice $\Z^3$, the FCC lattice $\mathsf{D}_3$ and the BCC lattice $\mathsf{D}_3^*$ are respectively represented by $(2^{1/3},1,0,0,0)\in \mathcal{D}_3$, $(1,1,0,1/2,1/2)\in \mathcal{D}_3$ and $(2^{-1/3},1,0,1/2,1/2)\in \mathcal{D}_3$. We recall that, since the three-dimensional case is too demanding in terms of computational time, we will only compare certain lattices and compute the Hessian matrices at the FCC lattice.

%
%
%

\subsection{The pure discrete case: combinations of $\mathcal{Q}_L(y)$}\label{subsec:disc}
We first explain why the problem of maximizing  the discrete version of the Fisher Information's trace \eqref{tracefinal} (choosing again $x=0$), i.e.
$$
\mathrm{Tr}J(0)= n\sum_{j=1}^N \mathcal{Q}_L(y_j),\quad \mathcal{Q}_L(y):=\frac{\left|\nabla_y \theta_{L+y}(\alpha)\right|^2}{\theta_{L+y}(\alpha)},
$$
is very difficult and therefore why it is convenient to consider -- as suggested by experiments \cite{Haftingetal,Gridcellsreview,Nageleetal} -- a continuous firing field $\Sigma$ instead of a set of points $Y$. We only consider the two-dimensional case, but the same remarks hold for higher dimensions.

\medskip

For any $L=\bigoplus_{i=1}^d \Z u_i$, if for all $j\in\{1,...,N\}$, $y_j\in \{u_i, \frac{u_i}{2}, \frac{1}{2}\sum_{i} u_i\}$ for any $i\in \{1,...,d\}$, then $\mathrm{Tr}J(0)=0$ since these points $y_j$ are known to be critical points for $y\mapsto \theta_{L+y}(\alpha)$ -- and more generally $y\mapsto E_f[L+Y]$ for any function $f$ satisfying \eqref{eq:Ef}. They are the trivial zeros of $\mathrm{Tr}J(0)$. In the particular case $L=\mathsf{A}_2$, the barycenters of the primitive triangles are also critical points of  $y\mapsto \theta_{L+y}(\alpha)$ for all $\alpha>0$ as shown in \cite{Baernstein-1997} (see Figure \ref{Gradient3}). Furthermore, in Figure \ref{fig:exdiscrete}, we show that the maximizer of $\mathrm{Tr}J(0)$ varies a lot with $\{y_j\}_j$.

\medskip

\begin{figure}[!h]
\centering
\includegraphics[width=8cm]{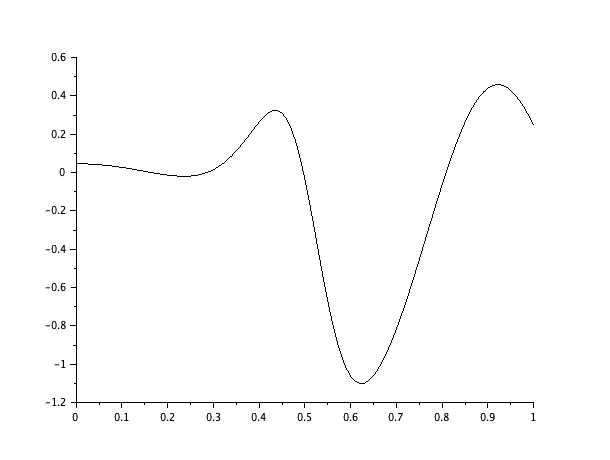} 
\caption{Plot of $t\mapsto \mathcal{Q}^\alpha_{\mathsf{A}_2}(0.1,0.1)+\mathcal{Q}^\alpha_{\mathsf{A}_2}(t,0.2)-\left(\mathcal{Q}^\alpha_{\Z^2}(0.1,0.1)+\mathcal{Q}^\alpha_{\Z^2}(t,0.2)\right)$ for $\alpha=\frac{10}{\pi}$ and $t\in [0,1]$, showing that the maximizer of $\mathrm{Tr}J_M(0)$ is not always triangular.}
\label{fig:exdiscrete}
\end{figure}

\begin{figure}[!h]
\centering
\includegraphics[width=5cm]{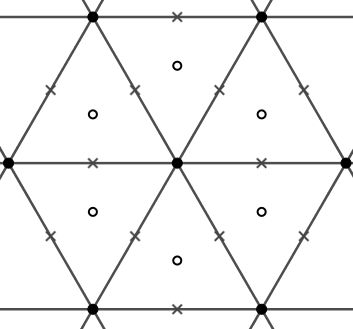} 
\caption{Zeros of $x\mapsto \nabla_x\theta_{\mathsf{A}_2+x}(\alpha)$ in a patch of $\mathsf{A}_2$. Following \cite{Baernstein-1997}, $\bullet$ are the maximizers, $\circ$ the minimizers and $\times$ the saddle points of the function $x\mapsto \theta_{\mathsf{A}_2+x}(\alpha)$. These are the only critical points of this function.}
\label{Gradient3}
\end{figure}

It has also to be noticed that $L\mapsto \displaystyle \mathcal{Q}^\alpha_{L}(y)$ alone is not always maximized by the same lattice as $y$ varies in $\Sigma=B_R$. Indeed, writing $y=(r\cos \theta, r\sin \theta)$ and fixing $r$, we compare the energies of the square and triangular lattices by plotting
\begin{equation}\label{eq:gr}
\theta\mapsto g_r(\theta):=\mathcal{Q}_{\Z^2}^{\alpha}(r\cos\theta, r\sin \theta)-\mathcal{Q}_{\mathsf{A}_2}^{\alpha}(r\cos\theta, r\sin \theta),\quad \alpha=\frac{10}{\pi}.    
\end{equation}
It is easy to give an explicit formula for $g_r(\theta)$ since this is the difference of
\begin{align*}
\mathcal{Q}_{\Z^2}^{\alpha}(r\cos\theta, r\sin \theta)=&4\pi^2 \alpha^2\frac{\displaystyle \left(\sum_{m,n\in \Z}(m+r\cos \theta) e^{-\pi \alpha [(m+r\cos \theta)^2+(n+r\sin \theta)^2  ]} \right)^2}{\displaystyle \sum_{m,n\in \Z}  e^{-\pi \alpha [(m+r\cos \theta)^2+(n+r\sin \theta)^2]} }\\
&\quad\quad\quad+4\pi^2 \alpha^2\frac{\displaystyle \left( \sum_{m,n\in \Z}(n+r\sin\theta) e^{-\pi \alpha [(m+r\cos \theta)^2+(n+r\sin \theta)^2  ]} \right)^2}{\displaystyle \sum_{m,n\in \Z}  e^{-\pi \alpha [(m+r\cos \theta)^2+(n+r\sin \theta)^2]}}
\end{align*}
and
\begin{align*}
\mathcal{Q}_{\mathsf{A}_2}^{\alpha}(r\cos\theta, r\sin \theta)=&4\pi^2 \alpha^2\frac{\displaystyle \left(\sum_{m,n\in \Z}\left(\lambda(m+\frac{n}{2})+r\cos \theta\right) e^{-\pi \alpha [(\lambda(m+\frac{n}{2})+r\cos \theta)^2+(\frac{\lambda \sqrt{3}}{2}n+r\sin \theta)^2  ]} \right)^2}{\displaystyle \sum_{m,n\in \Z} e^{-\pi \alpha [(\lambda(m+\frac{n}{2})+r\cos \theta)^2+(\frac{\lambda \sqrt{3}}{2}n+r\sin \theta)^2  ]} }\\
&\quad\quad\quad+4\pi^2 \alpha^2\frac{\displaystyle \left( \sum_{m,n\in \Z}\left(\frac{\lambda \sqrt{3}}{2}n+r\sin\theta\right) e^{-\pi \alpha [(\lambda(m+\frac{n}{2})+r\cos \theta)^2+(\frac{\lambda \sqrt{3}}{2}n+r\sin \theta)^2  ]}\right)^2}{\displaystyle \sum_{m,n\in \Z}  e^{-\pi \alpha [(\lambda(m+\frac{n}{2})+r\cos \theta)^2+(\frac{\lambda \sqrt{3}}{2}n+r\sin \theta)^2  ]}}.
\end{align*}

Our observations are the following (see Figure \ref{Plotgr1}):
\begin{itemize}
\item For small $r$, we observe that $g_r(\theta)<0$ for all $\theta\in [0,2\pi]$. The triangular lattice has higher energy than the square lattice.
\item For $r>0.24$, there are alternation of signs for $g_r$ on some intervals of $[0,2\pi]$.
\item For $r=0.6$, the positive part of $g_r$ is more important than the negative one.
\end{itemize}
Therefore, showing the maximality of $\mathsf{A}_2$ for the integral of $\mathcal{Q}_L^\alpha$ on a ball of radius larger than $R_0=0.24$ is necessarily tricky. Furthermore, it is not surprising that the triangular lattice is almost never a critical point of the integrand in $\mathcal{L}_2(1)$. To illustrate this fact, we have plotted in Figure \ref{Gradient1} the function $\theta\mapsto | \nabla_{L}\mathcal{Q}^\alpha_{L}(r\cos \theta, r\sin \theta) |^2$ on $[0,2\pi]$ for $L=\mathsf{A}_2$, $\alpha=10/\pi$ and fixed $r\in \{0.2,0.3\}$. Notice that it is straightforward to compute $ | \nabla_{L}\mathcal{Q}^\alpha_{L}(r\cos \theta, r\sin \theta) |^2$, recalling that $\mathsf{A}_2$ corresponds to the point $(x,y)=(1/2,\sqrt{3}/2)$, since
\begin{align*}
&| \nabla_{L}\mathcal{Q}^\alpha_{L}(r\cos \theta, r\sin \theta)|_{L=\mathsf{A}_2} |^2\\
&=\left( \partial_x \mathcal{Q}^\alpha_{L}(r\cos \theta, r\sin \theta)|_{(x,y)=(1/2,\sqrt{3}/2)}\right)^2+\left( \partial_y \mathcal{Q}^\alpha_{L}(r\cos \theta, r\sin \theta)|_{(x,y)=(1/2,\sqrt{3}/2)}\right)^2,
\end{align*}
where
\begin{align*}
\mathcal{Q}^\alpha_{L}(r\cos \theta, r\sin \theta)&=4\pi^2 \alpha^2\frac{\displaystyle \left(\sum_{m,n\in \Z} \left( \frac{(m+xn)}{\sqrt{y}}+r\cos \theta \right) e^{-\pi \alpha \left[ \left( \frac{(m+xn)}{\sqrt{y}}+r\cos \theta \right)^2 + (n\sqrt{y}+r\sin \theta)^2 \right]} \right)^2}{\displaystyle \sum_{m,n\in \Z} e^{-\pi \alpha \left[ \left( \frac{(m+xn)}{\sqrt{y}}+r\cos \theta \right)^2 + (n\sqrt{y}+r\sin \theta)^2 \right]}}\\
&+ 4\pi^2 \alpha^2 \frac{\displaystyle \left(\sum_{m,n\in \Z} (n\sqrt{y}+r\sin \theta) e^{-\pi \alpha \left[ \left( \frac{(m+xn)}{\sqrt{y}}+r\cos \theta \right)^2 + (n\sqrt{y}+r\sin \theta)^2 \right]}\right)^2}{\displaystyle \sum_{m,n\in \Z} e^{-\pi \alpha \left[ \left( \frac{(m+xn)}{\sqrt{y}}+r\cos \theta \right)^2 + (n\sqrt{y}+r\sin \theta)^2 \right]}}.
\end{align*}

\begin{figure}[!h]
\centering
\includegraphics[width=6cm]{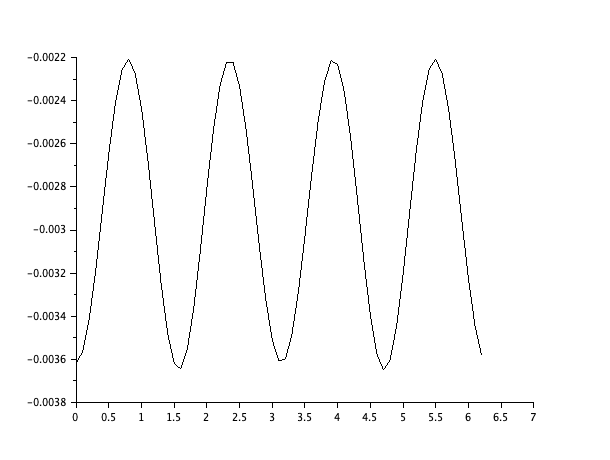} \quad \includegraphics[width=6cm]{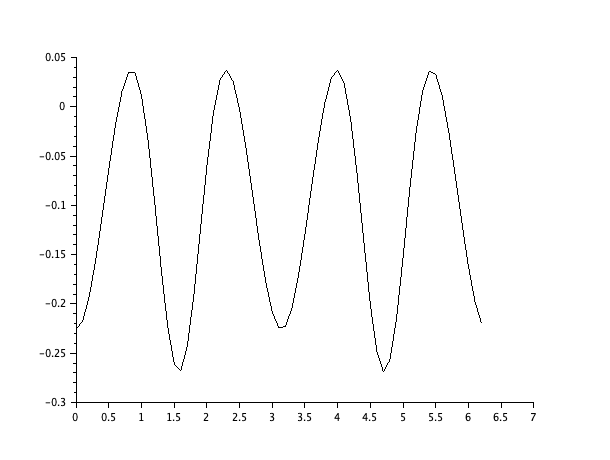}\\
\includegraphics[width=6cm]{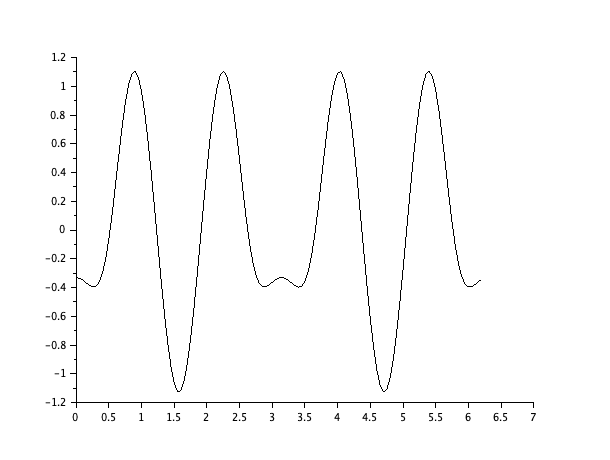} \quad \includegraphics[width=6cm]{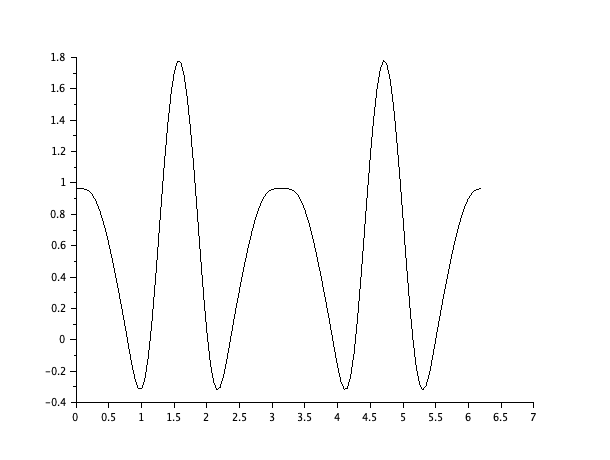}
\caption{Plots of $g_r$ (see \eqref{eq:gr}) on $[0,2\pi]$ for $\alpha=\frac{10}{\pi}$, $r\in\{0.1,0.3,0.5,0.6\}$ in reading direction.}
\label{Plotgr1}
\end{figure}


\begin{figure}[!h]
\centering
\includegraphics[width=6cm]{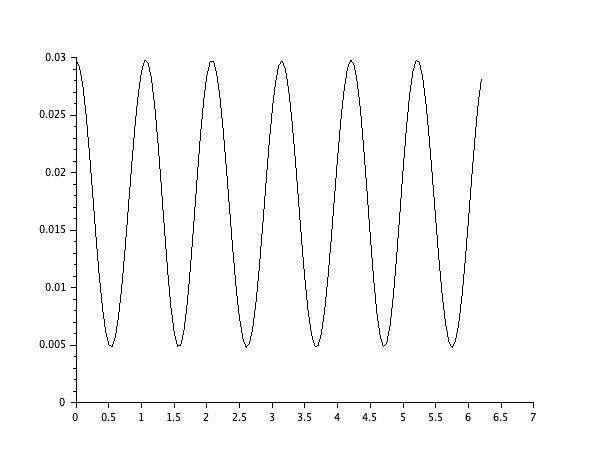} \quad \includegraphics[width=6cm]{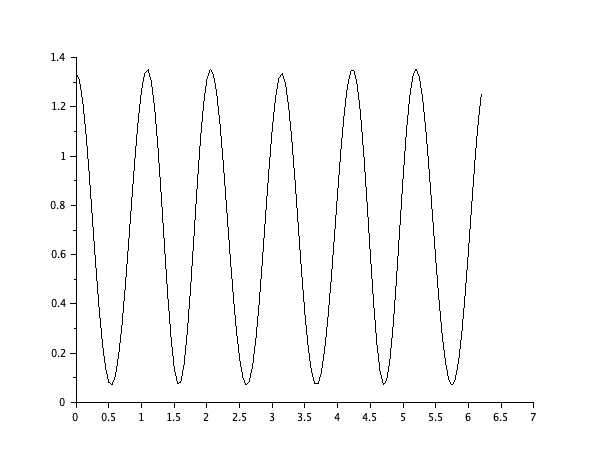}
\caption{Plot of $\theta\mapsto | \nabla_{L}\mathcal{Q}^\alpha_{L}(r\cos \theta, r\sin \theta) |^2$ on $[0,2\pi]$ for $L=\mathsf{A}_2$, $\alpha=\frac{10}{\pi}$ and fixed $r\in \{0.2,0.3\}$.}
\label{Gradient1}
\end{figure}



\subsection{Numerical results for $\mathcal{F}_\mu^\alpha$ in dimension 2}

We are considering the case $\alpha=\frac{10}{\pi}$  (see Figure \ref{Gaussian1graph}), in such a way that 
$$
\theta_{L+y}(\alpha)=\sum_{p\in L} e^{-10 |p+y|^2}.
$$
We are also assuming that $\mu=\sigma_R$ is the uniform measure on the disk $\Sigma=B_R$ for some $R>0$. For simplicity, we omit to renormalize the measure, in such a way that $\mu$ is almost never a probability measure. It does not matter in our study since we are considering optimizers at fixed $R$.\\

\begin{figure}[!h]
\centering
\includegraphics[width=6cm]{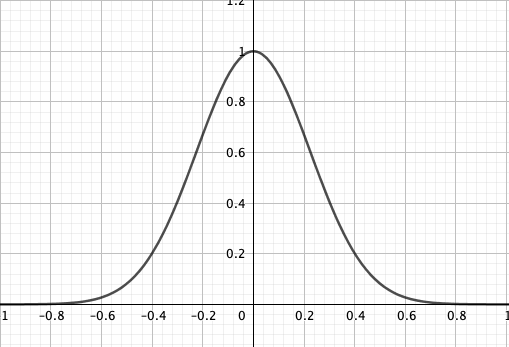} 
\caption{Plot of $r\mapsto f(r^2)=e^{-10r^2}$.}
\label{Gaussian1graph}
\end{figure}

We observe that for $R\in \{0.1,0.2,0.3,0.4,0.5\}$, the triangular lattice $\mathsf{A}_2$ is the unique maximizer of $\mathcal{F}_{\mu}^\alpha$ in $\mathcal{L}_2(1)$. In Figure \ref{Gaussian1}, we give one example of our numerical outputs as a contour plot for the particular case $R=0.5$. We have also checked that the same maximality result holds for $R=\frac{1}{2}\sqrt{\frac{2}{\sqrt{3}}}$ being the half side length of $\mathsf{A}_2$, i.e. the radius of the balls reaching the densest lattice packing of unit density in $\R^2$. If $R\geq 0.59$, the square lattice has a higher energy than the triangular one and $\mathcal{F}_{\mu}^\alpha$ does not have any maximizer.

\medskip

In order to illustrate Theorem \ref{thm:alpha0epsilon0}, we choose $\alpha=\frac{2}{\pi}$, $\Sigma=B_R$ $\mu=\sigma_R$, and we observe in Figure \ref{Gaussiana2R01}) that for $R=0.1$, $\mathsf{A}_2$ is the unique minimizer of $\mathcal{F}_{\mu}^\alpha$ in $\mathcal{L}_2(1)$ and the functional does not have any maximizer.

\begin{figure}[!h]
\centering
\includegraphics[width=10cm]{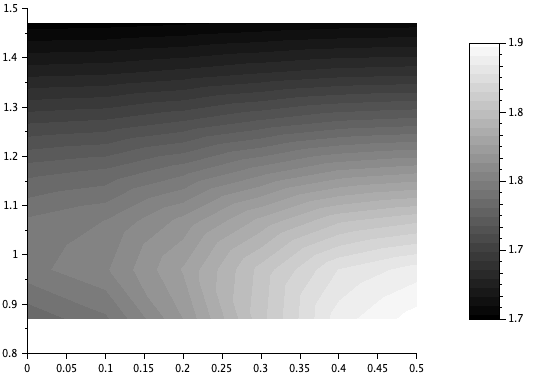} 
\caption{Case $\mu=\sigma_R$ uniform on $\Sigma=B_R$, $R=0.5$ and $\alpha=\frac{10}{\pi}$. The triangular lattice $\mathsf{A}_2$, corresponding to the point $(1/2,\sqrt{3}/2)$, is the unique maximizer of $\mathcal{F}_\mu^\alpha$ in $\mathcal{L}_2(1)$.}
\label{Gaussian1}
\end{figure}

\begin{figure}[!h]
\centering
\includegraphics[width=8cm]{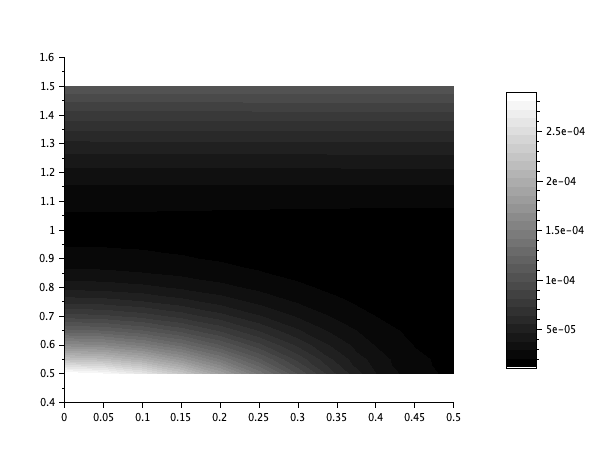} \quad \includegraphics[width=8cm]{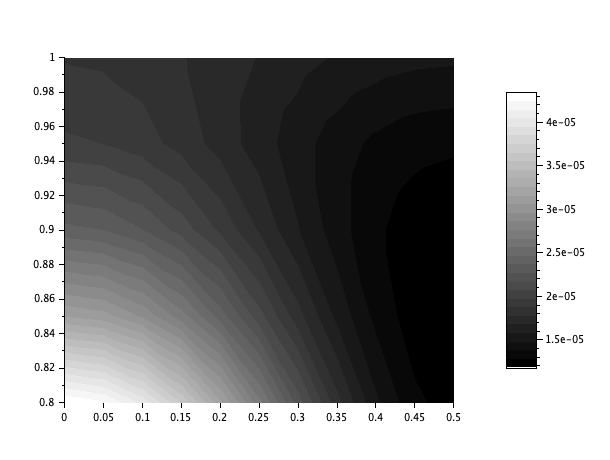} 
\caption{Illustration of Theorem \ref{thm:alpha0epsilon0}. Case $\mu=\sigma_R$ uniform on $\Sigma=B_R$, $R=0.1$ and $\alpha=\frac{2}{\pi}$. There is no maximizer (left). Furthermore, the triangular lattice $\mathsf{A}_2$, corresponding to the point $(1/2,\sqrt{3}/2)$, is the unique minimizer of $\mathcal{F}_\mu^\alpha$ in $\mathcal{L}_2(1)$.}
\label{Gaussiana2R01}
\end{figure}

If we compare the Fisher Information of the two-dimensional volume-stationary lattices, i.e. $\mathsf{A}_2$ and $\Z^2$ for fixed $\alpha=10/\pi$ and increasing $R$, we obtain the graph of Figure \ref{fig:a10Rup}. We observe that, for $R\in [0.1,0.57]$, $\mathcal{F}_{\sigma_R}^\alpha(\mathsf{A}_2)>\mathcal{F}_{\sigma_R}^\alpha(\Z^2)$ whereas the square lattice has a higher Fisher Information for $R\in [0.58,0.7]$. We also remark that the firing fields start to overlap when $R>\frac{1}{2}\sqrt{\frac{2}{\sqrt{3}}}\approx 0.5373$.

\begin{figure}[!h]
\centering
\includegraphics[width=10cm]{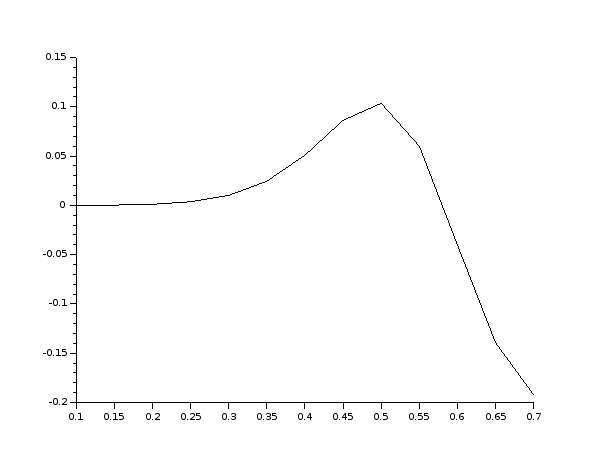} 
\caption{Plot of $R\mapsto \mathcal{F}_{\sigma_R}^\alpha(\mathsf{A}_2)-\mathcal{F}_{\sigma_R}^\alpha(\Z^2)$ where $\alpha=\frac{10}{\pi}$ and $\sigma_R$ is the uniform measure on $\Sigma=B_R$, for $R\in [0.1,0.7]$.}
\label{fig:a10Rup}
\end{figure}

\medskip

We have also numerically checked the case where the ratio between $2R$ and the lattice spacing of the triangular lattice is approximately $0.3$. Applied to the triangular lattice $\mathsf{A}_2$ of side length $\sqrt{2/\sqrt{3}}$, it means that $R=0.16$. For this value of $R$, we have found that the triangular lattice is always maximal for $\alpha\in (1.25,+\infty)$ (see Figure \ref{fig:expR}). Notice that even though a large zone of these figures seems to reach the maximum around the triangular lattice, we have checked that the minimum is really given by the latter.

\begin{figure}[!h]
\centering
\includegraphics[width=5cm]{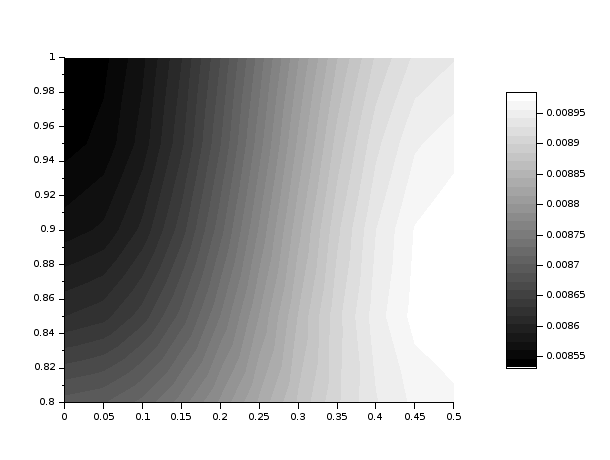} \quad \includegraphics[width=5cm]{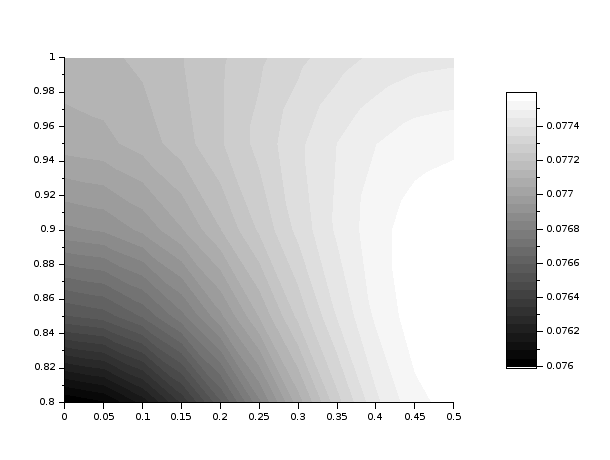}\quad \includegraphics[width=5cm]{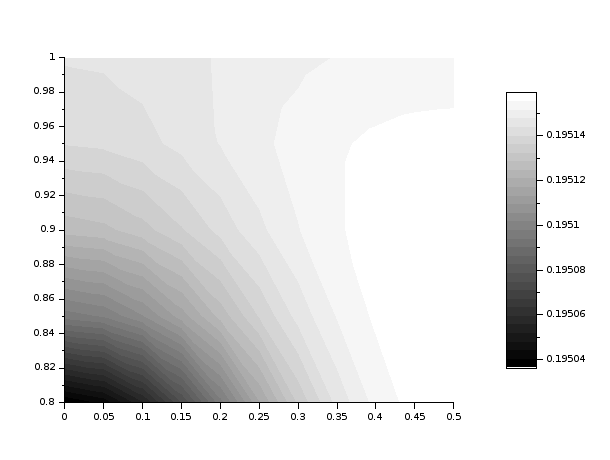}
\caption{Plot of $\mathcal{F}_{\sigma_R}^\alpha$ where $R=0.16$ and $\alpha\in \{1.3,3,5\}$. The triangular lattice appears to be a maximizer in all these cases.}
\label{fig:expR}
\end{figure}

\subsection{Numerical results for $\mathcal{F}_\mu^\alpha$ in dimension 3}

For our investigation in dimension $d=3$, we have again chosen $\alpha=\frac{10}{\pi}$. Since the numerical computations are very demanding, we are only giving the following type of results: some local optimality results and some comparisons of volume-stationary lattices, i.e. $\Z^3,\mathsf{D}_3$ and $\mathsf{D}_3^*$.

\medskip

As in the previous section on two-dimensional lattices, it is clear that $\mathsf{D}_3$, $\mathsf{D}_3^*$ and $\Z^3$ are almost never critical points for $L\mapsto \mathcal{Q}_L^\alpha(y)$ for a given $y$. It is only true for some points $y$ being the holes or deep holes of $L$ (center of cells, midpoints, etc.) as we have already explained for $\mathsf{A}_2$ (see Figure \ref{fig:exdiscrete}). It is therefore not sufficient to study $L\mapsto\mathcal{Q}_L^\alpha(y)$ in order to investigate the extrema of $\mathcal{F}_\mu^\alpha$.

\medskip

By comparing $\mathcal{Q}_L^\alpha(y)$ for $y\in \Sigma=B_R$, we observe that  there exists $R_0\approx 0.1$ such that 
$$
\mathcal{Q}_{\mathsf{D}_3}(y)>\mathcal{Q}_{\mathsf{D}_3^*}(y)>\mathcal{Q}_{\Z^3}(y),\quad \forall y\in B_{R_0},
$$
which directly means that, for $\mu=\sigma_{R_0}$, 
\begin{equation}\label{eq:3dcompar}
\mathcal{F}_\mu^\alpha(\mathsf{D}_3)>\mathcal{F}_\mu^\alpha(\mathsf{D}_3^*)>\mathcal{F}_\mu^\alpha(\Z^3).
\end{equation}
We have also checked on a set of discrete values of $R$ that \eqref{eq:3dcompar} still holds for $R\in [0.1,0.57]$. Notice that $0.57>2^{-5/6}=:R_{\mathsf{D}_3}$ which is the radius of spheres giving the densest packing of unit density in $\R^3$, on a FCC lattice.

\medskip

Furthermore, we have also numerically checked the local maximality of $\mathsf{D}_3$ for radii $R\in \{0.1,0.2,0.3,0.4,0.5,2^{-\frac{5}{6}}\}$ by computing the Hessian of $\mathcal{F}_\mu^\alpha$ at $L=\mathsf{D}_3$ which appears to be definite positive in all these cases.

\medskip
\medskip

\noindent \textbf{Acknowledgments:} I would like to thank the Austrian Science Fund (FWF) for its financial support through the project F65 as well as the DFG-FWF international joint project FR 4083/3-1/I 4354. I would also like to thank Martin Stemmler for helpful discussions about the size of the firing fields. I am also grateful to the anonymous referees for their suggestions.

{\small \bibliographystyle{plain}
\bibliography{Gridcells}}

\end{document}